\documentclass[12pt, reqno]{amsart}
\usepackage{amssymb, amstext, amscd, amsmath}
\usepackage{color}


%
\makeatletter
\def\@cite#1#2{{\m@th\upshape\bfseries%
[{#1\if@tempswa{\m@th\upshape\mdseries, #2}\fi}]}} \makeatother
%
\theoremstyle{plain}
\newtheorem{thm}[subsection]{Theorem}
\newtheorem{cor}[subsection]{Corollary}
\newtheorem{prop}[subsection]{Proposition}
\newtheorem{lem}[subsection]{Lemma}
%
\theoremstyle{definition}
\newtheorem{rem}[subsection]{Remark}

\newtheorem{eg}[subsection]{Example}

%



\newcommand{\bC}{{\mathbb{C}}}

\newcommand{\bN}{{\mathbb{N}}}


  \newcommand{\A}{{\mathcal{A}}}
  \newcommand{\B}{{\mathcal{B}}}
  
  \newcommand{\D}{{\mathcal{D}}}

\renewcommand{\H}{{\mathcal{H}}}

  \newcommand{\K}{{\mathcal{K}}}

\newcommand{\M}{{\mathcal{M}}}

\renewcommand{\S}{{\mathcal{S}}}





\renewcommand{\phi}{\varphi}
\newcommand{\upchi}{{\raise.35ex\hbox{\ensuremath{\chi}}}}



\def\dim{{\rm dim}\,}




\begin{document}
\title[O.~A.~M.~on Hilbert Modules]
{Orthogonally Additive Mappings on Hilbert Modules}
\author[D. Ili\v{s}evi\'{c}, A. Turn\v{s}ek and D. Yang]
{Dijana Ili\v{s}evi\'c, Aleksej Turn\v{s}ek and Dilian Yang$^*$}
\address{Dijana Ili\v{s}evi\'c, Department of Mathematics,
University of Zagreb, Bijeni\v{c}ka 30,
10000 Zagreb, Croatia}
\email{ilisevic@math.hr}
\address{Aleksej Turn\v{s}ek, 
Faculty  of Maritime Studies and Transport\\
University of Ljubljana\\
Pot pomor\-\v{s}\v{c}akov 4\\
6320 Portoro\v{z}, Slovenia\\
and Institute of Mathematics, Physics and Mechanics\\
Jadranska 19\\
1000 Ljubljana, Slovenia
}
\email{aleksej.turnsek@fmf.uni-lj.si}
\address{Dilian Yang,
Department of Mathematics \& Statistics, University of Windsor,
ON N9B 3P4, Canada
}
\email{dyang@uwindsor.ca}

\begin{abstract}
In this paper, we study the representation of orthogonally additive mappings acting on Hilbert $C^*$-modules and Hilbert $H^*$-modules.
One of our main results shows that every continuous orthogonally additive mapping $f$ from a Hilbert module $W$ over $\K(\H)$ or $\H\S(\H)$
to a complex normed space is of the form
$f(x)=T(x)+\Phi(\langle x, x \rangle)$ for all $x\in W$,
where $T$ is a continuous additive mapping, and $\Phi$ is a continuous linear mapping.
\end{abstract}

\subjclass[2010]{46L08, 39B55, 47B48.}
\keywords{Orthogonally additive mapping, Hilbert $C^*$-module, Hilbert $H^*$-module, orthogonality preserving mapping.}
\thanks{${}^*$ The first author was supported by the Ministry of Science, Educations and Sports of the
Republic of Croatia (Grant 037-0372784-2757).
The third author is partially supported by an NSERC Discovery Grant (Canada).}

\date{}
\maketitle

Let $\A$ be a $C^*$-algebra or an $H^*$-algebra, $(W, \langle \, . \, , . \, \rangle)$ be a Hilbert module over $\A$, and $G$ be a complex normed space.
A continuous mapping $W\to G$ is said to be orthogonally additive if for all $x, y \in W$,
\[
\langle x, y\rangle =0 \implies f(x+y)=f(x)+f(y).
\]
In this paper, we study the representation of orthogonally additive mappings. If $T:W\to G$ is a continuous additive mapping, and $\Phi: \A\to G$
is a continuous mapping, then clearly the mapping $f : W \to G$ defined by
\begin{align}
\label{E:rep}
f(x)=T(x)+\Phi(\langle x, x\rangle) \quad \text{for all} \quad x\in W
\end{align}
is a continuous orthogonally additive mapping. One of our main goals is to show that the converse also holds true if $\A$ is a $C^*$-algebra of compact operators or an $H^*$-algebra. In particular, this answers \cite[Problem 27]{z} affirmatively, not only for Hilbert $H^*$-modules, but for Hilbert $C^*$-modules over a $C^*$-algebra of compact operators as well.
Other related problems in \cite{z} have been also solved in \cite{i1, i2, i3}.


Orthogonally additive mappings have been extensively studied from many aspects.
See the survey \cite{Ratz} and the references therein for the representation of orthogonally additive mappings
on orthogonality spaces. Refer to \cite{Szab1, Szab2, Yang} for the connection between the existence
of even orthogonally additive mappings and inner product spaces.
Recently, several mathematicians have obtained some interesting results on orthogonally additive polynomials.
See, e.g.,  \cite{BLL, CLZ1, CLZ2, PPV}, among others.

The rest of this paper is organized as follows. In Section \ref{intro}, we give some necessary background and set up some notation.
Section \ref{general} deals with $\perp$-additive mappings on abelian groups. The context there is very general, so
the results there may be also useful in future. Applying the results of Section \ref{general}, we obtain the representation of orthogonally additive
mappings in general Hilbert modules in Section \ref{modules}.
In Section \ref{K(H)&HS(H)}, we
strengthen the results of Section \ref{modules} in the case when $\A$ is $\K(\H)$ or  $\H\S(\H)$.
The main result is then generalized to any Hilbert module over a $C^*$-algebra of compact operators or $H^*$-algebra in Section \ref{C*&H*}. In the last section, we obtain the representation of orthogonally additive
mappings on $\B(\H_1,\H_2)$.

\section{Preliminaries}\label{intro}

In this section, we give some necessary background and set up our notation.

\subsection{Hilbert $C^*$-modules and Hilbert $H^*$-modules}

Hilbert modules arise as generalizations of a complex Hilbert space when the complex field is replaced by a $C^*$-algebra or an $H^*$-algebra. The idea of replacing the complex numbers by the elements of a $C^*$-algebra first appeared in the work of Kaplansky \cite{k} and by the elements of a proper $H^*$-algebra in the work of Saworotnow \cite{sa}.

A $C^*$-algebra is a complex Banach $\ast$-algebra $(\A,\Vert \, .\,\Vert)$ such that $\Vert a^*a \Vert = \Vert a \Vert^2$ for all $a \in \A.$
An $H^*$-algebra is a complex Banach $\ast$-algebra $(\A,\Vert \, .\,\Vert)$, whose underlying Banach space is a Hilbert space with respect to the inner product
$\langle \, .\, ,\, .\,\rangle$ satisfying
$\langle ab,c \rangle=\langle b,a^*c \rangle$
and  $\langle ba,c \rangle=\langle b,ca^* \rangle$
for all  $a,b,c \in \A.$ The trace-class associated with an $H^*$-algebra $\A$ is defined as the set $\tau(\A)=\{ab : a,b \in \A\};$ it is a self-adjoint two-sided ideal of $\A$ which is dense in $\A.$

Some examples of $C^*$ algebras are $\B(\H)$ and $\K(\H),$ the algebras of all bounded operators, resp.~all compact operators, on some complex Hilbert space $\H.$ An example of an $H^*$-algebra is $\H\S(\H),$ the algebra of all Hilbert-Schmidt operators on $\H.$

An element $a$ in a $C^*$-algebra $\A$ is called positive $(a \geq 0)$ if it is self-adjoint and has nonnegative spectrum.
An element $a$ in an $H^*$-algebra $\A$ is called positive $(a \geq 0)$ if $\langle ax, x \rangle \geq 0$ for all $x \in \A.$
If $\A$ is a $C^*$-algebra (resp.~an $H^*$-algebra) then every positive $a \in \A$ (resp.~$a \in \tau(A)$) can be written as $a=b^*b$ for some $b \in \A.$
Every $a \in \A$ can be written as a linear combination of four positive elements, in both structures. A projection (i.e., self-adjoint idempotent) $e \in \A$ is called minimal if $e\A e=\mathbb{C}e.$

Let $\A$ be a $C^*$-algebra or an $H^*$-algebra. Let $W$ be an algebraic right $\A$-module which is a complex linear space with a compatible scalar multiplication, i.e.~$\lambda(xa)=(\lambda x)a=x(\lambda a)$ for all $x \in W,$ $a \in \A,$ $\lambda \in \mathbb{C}.$ The space $W$ is called a (right) inner product $\A$-module if there exists a generalized inner product, that is, a mapping
$\langle \, .\, ,\, .\, \rangle$ from $W \times W$ to $\A$ if $\A$ is a $C^*$-algebra, and to $\tau(A)$ if $\A$ is an $H^*$-algebra, having the following properties:
\begin{itemize}
\item[(i)] $\langle x,y+z \rangle = \langle x,y \rangle+\langle x,z\rangle$ for all $x,y,z \in W,$
\item[(ii)] $\langle x,ya \rangle=\langle x,y\rangle a$ for all $x,y \in W$ and $a \in \A,$
\item[(iii)] $\langle x,y\rangle^*=\langle y,x\rangle$ for all $x,y \in W,$
\item[(iv)] $\langle x,x\rangle \geq 0$ for all $x \in W,$ and $\langle x,x \rangle=0 \Leftrightarrow x=0.$
\end{itemize}

If $W$ is an inner product module over $(\A, \Vert \, . \, \Vert)$, then for $x \in W$ we write $\Vert x \Vert_W = \Vert \langle x,x \rangle \Vert^{\frac{1}{2}}.$
If $W$ is complete with respect to this norm, then it is called a Hilbert $\A$-module, or a Hilbert $C^*$-module (resp.~$H^*$-module) over the $C^*$-algebra (resp.~ $H^*$-algebra) $\A.$

We shall use the symbol $\langle W, W \rangle$ for the linear span of all inner products $\langle x,y \rangle,$ $x,y \in W.$ A Hilbert $\A$-module $W$ is said to be full if $\langle W, W \rangle$ is dense in $\A.$
Notice that $\A$ is a (full) Hilbert $\A$-module via $\langle x,y \rangle = x^*y$ for all $x,y \in \A.$

Let $w \in W$.  If $\langle w, w \rangle = e$ is a projection in $\A,$ then $we=w.$
Indeed,
$$\langle w-we, w-we \rangle = \langle w, w \rangle - \langle w, w \rangle e - e \langle w, w \rangle + e \langle w, w \rangle e = 0$$
(see the paragraph before \cite[Lemma 1]{bg2}).
This property will be used frequently later.

The main difference between Hilbert $C^*$-modules and Hilbert $H^*$-modules is the fact that Hilbert $H^*$-modules can be equipped with the structure of a complex Hilbert space. Although both structures obey the same axioms as ordinary Hilbert spaces (except that the inner product takes values in a more general structure than in the field of complex numbers), there are some properties that differ Hilbert $C^*$-modules from Hilbert spaces. For example, a closed submodule $V$ of a Hilbert $C^*$-module $W$ need not be (orthogonally) complemented, that is, $V \oplus V^{\perp} \neq W$ in general, where $V^{\perp}$ denotes $\{x \in W \, : \, \langle x,y \rangle =0 \textup{ for all } y \in V\}.$ However, Hilbert $C^*$-modules over compact operators share many nice properties with Hilbert spaces; in particular all closed submodules of such modules are complemented.

We shall deal with Hilbert $C^*$-modules over $C^*$-algebras of compact operators, and Hilbert $H^*$-modules. These structures possess orthonormal bases. More precisely, if $W$ is a Hilbert $\A$-module, where $\A$ is a $C^*$-algebra of compact operators or an $H^*$-algebra, then there exists a net $\{w_i \, : \, i \in I\}$ generating a dense submodule of $W,$ such that $\langle w_i, w_i \rangle$ is a minimal projection in $\A$, and $\langle w_i, w_j \rangle =0$ for $i \neq j.$ All these orthonormal bases have the same cardinal number which is called the orthogonal dimension of $W$ and denoted by $\dim_{\A}W.$ More details on orthonormal bases for Hilbert $C^*$-modules over $C^*$-algebras of compact operators can be found in \cite{bg2}, and for Hilbert $H^*$-modules in \cite{cmr}.

\subsection{Notation and conventions}

Let $W$ be a Hilbert $C^*$-module (resp.~ $H^*$-module) over a $C^*$-algebra (resp.~ an $H^*$-algebra) $\A$. We simply use Hilbert $\A$-module or Hilbert module over $\A$ to denote
either of them.

If $(\H, (\, . \, , . \,))$ is a Hilbert space and $\xi, \eta \in \H,$ then by $\xi \otimes \eta$  we denote the rank one operator defined by
$(\xi \otimes \eta)(\nu)=(\nu, \eta) \xi$ for all $\nu \in \H.$

All spaces are assumed to be over complex numbers.

If $G$ is an abelian group, we always use ``$+$" as its group operation.

``Orthogonally additive mapping(s)" are abbreviated as ``o.~a.~m.".

\section{$\perp$-additive mappings on Abelian Groups}\label{general}

Let $W$ and $G$ be abelian groups.
Suppose that $\perp$ is a binary relation on $W$.
We shall say that a mapping $f : W \to G$ is \textit{$\perp$-additive} if for all $x,y \in W$
$$x \perp y \implies f(x+y)=f(x)+f(y),$$
and that a mapping $F : W \times W \to G$ is \textit{$\perp$-preserving} if for all $x,y \in W$
$$x \perp y \implies F(x,y)=0.$$

Let us recall that a mapping $T: W\to G$ is called \textit{additive} if
$T(x+y)=T(x)+T(y)$ for all $x,y\in W$, a mapping $B : W \times W \to G$ is called \textit{biadditive} if it is additive in both variables,
a mapping $Q:W\to G$ is called \textit{quadratic} if
$Q(x+y)+Q(x-y)=2Q(x)+2Q(y)$ for all $x,y\in W,$
and if $W$ is a complex vector space then a mapping $S : W \times W \to G$ is called \textit{sesquilinear} if it is linear in the first
and conjugate linear in the second variable.

\begin{lem}\label{1.1}
Let $W$ be an abelian group with a binary relation $\perp$,
and $V, G$ be uniquely 2--divisible abelian groups.
Suppose that there exist additive mappings $\varphi, \psi : V \to W$ with the following properties:
\begin{align}
\label{phipsi}
\varphi(V) \perp \psi(V)\quad \text{and}\quad  (\varphi+\psi)(V) \perp (\varphi-\psi)(V).
\end{align}
Let $W_0:=\phi(V)+\psi(V) \leq W.$
If $f : W \to G$ is a $\perp$-additive mapping, then the following holds:

(i) If $f$ is odd (resp.~even), then $f$ is additive (resp.~quadratic) on $W_0.$

(ii) If $x \perp y$ implies $(-x) \perp (-y)$, then there exist mappings $T : W \to G$ and $B : W \times W \to G$ such that
$T$ is additive on $W_0$, $B$ is $\perp$-preserving symmetric biadditive on $W_0 \times W_0,$ and
$$f(x)=T(x)+B(x,x)\quad \text{for all}\quad  x\in W_0.$$
\end{lem}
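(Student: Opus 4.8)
The plan is to split $f$ into its odd and even parts and reduce the whole statement to the behaviour of the two composite maps $g:=f\circ\varphi$ and $h:=f\circ\psi$ from $V$ to $G$; since $\varphi,\psi$ are additive, $g$ and $h$ have the same parity as $f$. The starting point is a single \emph{master identity}. Every element of $W_0=\varphi(V)+\psi(V)$ has the form $\varphi(v)+\psi(w)$, and the first relation in \eqref{phipsi} gives $\varphi(v)\perp\psi(w)$, so $\perp$-additivity yields
\[
f(\varphi(v)+\psi(w))=g(v)+h(w)\qquad(v,w\in V).
\]
Thus $f|_{W_0}$ is entirely encoded by $g$ and $h$, and, feeding representatives $x=\varphi(v_1)+\psi(w_1)$ and $y=\varphi(v_2)+\psi(w_2)$ into this identity, part (i) reduces to showing that $g,h$ are additive (odd case) resp.\ quadratic (even case). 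Note $0\perp 0$ by \eqref{phipsi} with $v=w=0$, whence $f(0)=0$.

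Next I would exploit the second relation in \eqref{phipsi}. Applying $\perp$-additivity to $(\varphi(v)+\psi(v))\perp(\varphi(w)-\psi(w))$ and rewriting $\varphi(w)-\psi(w)=\varphi(w)+\psi(-w)$ through the master identity, I obtain the single functional equation
\[
g(v+w)+h(v-w)=g(v)+g(w)+h(v)\pm h(w),
\]
with sign $-$ when $f$ is odd and $+$ when $f$ is even. In the \emph{odd} case one swaps $v\leftrightarrow w$, uses $h(-\,\cdot)=-h$, and then adds and subtracts the two resulting equations; unique $2$-divisibility of $G$ immediately gives $g(v+w)=g(v)+g(w)$ and $h(v-w)=h(v)-h(w)$, so both are additive. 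Reinserting this into the master identity shows $f(x+y)=f(x)+f(y)$ on $W_0$.

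The \emph{even} case is where I expect the \textbf{main obstacle}: the $v\leftrightarrow w$ symmetrization degenerates because the right-hand side is now symmetric, so a different route is needed. Here I would first set $w=v$ to get $g(2v)=2g(v)+2h(v)$, i.e.\ $h=\tfrac12\,g(2\,\cdot)-g$; substituting this back into the functional equation and then adding the equation to its image under $w\mapsto -w$ cancels the first-order terms and leaves
\[
g(2v+2w)+g(2v-2w)=2g(2v)+2g(2w).
\]
Since $V$ is uniquely $2$-divisible, $2v$ and $2w$ range over all of $V$, so $g$ obeys the parallelogram law and is quadratic; then $g(2\,\cdot)=4g$ forces $h=g$, so $h$ is quadratic as well. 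Reinserting the two parallelogram laws into the master identity gives $f(x+y)+f(x-y)=2f(x)+2f(y)$ on $W_0$, completing (i).

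For (ii), the extra hypothesis $x\perp y\Rightarrow(-x)\perp(-y)$ is exactly what makes the odd and even parts $f_o(x)=\tfrac12(f(x)-f(-x))$ and $f_e(x)=\tfrac12(f(x)+f(-x))$ (well defined since $G$ is uniquely $2$-divisible) themselves $\perp$-additive: if $x\perp y$ then $(-x)\perp(-y)$ gives $f(-x-y)=f(-x)+f(-y)$, and the assertion follows. By (i), $f_o$ is additive and $f_e$ is quadratic on $W_0$. I would then set $T:=f_o$ and, on $W_0\times W_0$, define the polarization $B(x,y):=\tfrac12\bigl(f_e(x+y)-f_e(x)-f_e(y)\bigr)$ (extended arbitrarily, say by $0$, off $W_0\times W_0$). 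This $B$ is symmetric, satisfies $B(x,x)=f_e(x)$ because $f_e(2x)=4f_e(x)$, and is $\perp$-preserving since $f_e(x+y)=f_e(x)+f_e(y)$ whenever $x\perp y$; its biadditivity on $W_0\times W_0$ is the classical Jordan--von Neumann polarization of a quadratic map into a uniquely $2$-divisible group. Then $f=f_o+f_e=T(\cdot)+B(\cdot,\cdot)$ on $W_0$, which is the desired representation.
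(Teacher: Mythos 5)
Your proof is correct and follows essentially the same route as the paper: your functional equation for $g=f\circ\varphi$, $h=f\circ\psi$ is exactly the paper's key identity (1), the odd case is handled by the identical swap-and-add argument, and part (ii) uses the same odd/even decomposition with a polarization of the even part. The only differences are cosmetic: in the even case you eliminate $h$ via $h=\tfrac12 g(2\,\cdot)-g$ and symmetrize under $w\mapsto -w$ where the paper first derives $f\circ\varphi=f\circ\psi$ by comparing the substitutions $y=x$ and $y=-x$, and in (ii) you use the half-polarization $\tfrac12\bigl(f_e(x+y)-f_e(x)-f_e(y)\bigr)$ (which agrees on $W_0$ with the paper's $\tfrac14\bigl(F(x+y)-F(x-y)\bigr)$ since $f_e$ is quadratic there, and makes $\perp$-preservation immediate) while citing the classical biadditivity of the polarization, a fact the paper itself calls well-known and reproves only for convenience.
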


\begin{proof}
(i) \,
Since $\varphi$ and $\psi$ are additive, clearly they are odd.
Now using the $\perp$-additivity of $f$, additivity of $\varphi$ and $\psi,$ as well as the properties given in (\ref{phipsi}), we obtain
\begin{eqnarray}\label{1}
&& f\big(\varphi(x)+\varphi(y)\big) + f\big(\psi(x)-\psi(y)\big) \nonumber \\
&=& f\big(\varphi(x+y)\big)+f\big(\psi(x-y)\big) \nonumber \\
&=& f\big(\varphi(x+y)+\psi(x-y)\big) \nonumber \\
&=& f\big(\varphi(x)+\varphi(y)+\psi(x)-\psi(y)\big) \nonumber \\
&=& f\big((\varphi+\psi)(x)+(\varphi-\psi)(y)\big) \nonumber \\
&=& f\big((\varphi+\psi)(x)\big)+f\big((\varphi-\psi)(y)\big) \nonumber \\
&=& f\big(\varphi(x)+\psi(x)\big)+f\big(\varphi(y)-\psi(y)\big) \nonumber \\
&=& f\big(\varphi(x)\big)+f\big(\psi(x)\big)+f\big(\varphi(y)\big)+f\big(-\psi(y)\big)
\end{eqnarray}
for all $x,y \in V$.

First assume that $f$ is odd. Switching $x$ and $y$ in (\ref{1}) gives
\begin{eqnarray}\label{2}
&& f\big(\varphi(x)+\varphi(y)\big) + f\big(\psi(y)-\psi(x)\big) \nonumber \\
&=& f\big(\varphi(y)\big)+f\big(\psi(y)\big)+f\big(\varphi(x)\big)+f\big(-\psi(x)\big).
\end{eqnarray}
Add (\ref{1}) and (\ref{2}) and use that $f$ is odd to get
$$2f\big(\varphi(x)+\varphi(y)\big) = 2f\big(\varphi(x)\big)+2f\big(\varphi(y)\big),$$
equivalently,
$$f\big(\varphi(x)+\varphi(y)\big) = f\big(\varphi(x)\big)+f\big(\varphi(y)\big)$$
as $G$ is uniquely 2--divisible.
Thus $f$ is additive on $\varphi(V).$ Then (\ref{1}) reduces to
$$f\big(\psi(x)-\psi(y)\big) = f\big(\psi(x)\big)+f\big(-\psi(y)\big),$$
that is,
$$f\big(\psi(x)+\psi(y)\big) = f\big(\psi(x)\big)+f\big(\psi(y)\big)$$
as $\psi$ is odd.
Hence, $f$ is additive on $\psi(V)$ as well. It is now easy to verify that $f$ is additive on $W_0.$

Now assume that $f$ is even. Put $y=x$ in (\ref{1}) to get
\begin{equation}\label{3}
f\big(2\varphi(x)\big)+f(0) = 2f\big(\varphi(x)\big)+2f\big(\psi(x)\big),
\end{equation}
then put $y=-x$ in (\ref{1}) to get
\begin{equation}\label{4}
f(0)+f\big(2\psi(x)\big) = 2f\big(\varphi(x)\big)+2f\big(\psi(x)\big).
\end{equation}
Comparing (\ref{3}) and (\ref{4}) yields $f\big(2\varphi(x)\big)=f\big(2\psi(x)\big),$ that is,
$f\big(\varphi(2x)\big)=f\big(\psi(2x)\big)$ for all $x \in V.$
Since $V$ is uniquely 2--divisible, we have
\begin{equation}
\label{fphipsi}
f\big(\varphi(x)\big)=f\big(\psi(x)\big)\quad \text{for all}\quad  x \in V.
\end{equation}
Then $f\big(\psi(x)-\psi(y)\big)=f\big(\varphi(x)-\varphi(y)\big)$ for all $x,y \in V.$
This together with (\ref{1}) implies
\begin{eqnarray}
f\big(\varphi(x)+\varphi(y)\big)+f\big(\varphi(x)-\varphi(y)\big)
=2f\big(\varphi(x)\big)+2f\big(\varphi(y)\big), \nonumber
\end{eqnarray}
so $f$ is quadratic on $\varphi(V).$ Then it follows from \eqref{fphipsi} that $f$ is also quadratic on $\psi(V)$.
Therefore $f$ is quadratic on $W_0$.

\medskip
(ii) \,
Set
\[
T(x)=\frac{1}{2}\big(f(x)-f(-x)\big),\ F(x)=\frac{1}{2}\big(f(x)+f(-x)\big) \quad\text{for all}\quad {x\in W}.
\]
Then $T$ is odd and $\perp$-additive.
By (i), $T$ is additive on $W_0.$
Furthermore, $F$ is even and $\perp$-additive.
Again by (i), $F$ is quadratic on $W_0.$
Then $F(0)=0,$ so
$$F(x+x)+F(x-x)=2F(x)+2F(x)$$
yields $F(2x)=4F(x)$ for all $x \in W_0.$

Define
\[
B(x,y)=\frac{1}{4}\big(F(x+y)-F(x-y)\big)\quad\text{for all}\quad {x,y\in W}.
\]
 Since $F$ is even, $B$ is symmetric.
 It is well-known that $B$ is biadditive (on $W_0$),
 but in the sequel we prove this fact for reader's convenience.
Obviously, $B(x,0)=B(0,x)=0$ and $B(x,x)=\frac{1}{4}\big(F(2x)-F(0)\big)=F(x)$ for all $x \in W_0.$
Since $F$ is quadratic on $W_0$, for all $x,y,u \in W_0$ we have
\begin{eqnarray}
&&4B(x+y,2u)=F(x+y+2u)-F(x+y-2u) \nonumber \\
&=&F\big((x+u)+(y+u)\big)+F\big((x+u)-(y+u)\big) \nonumber \\
&&-F\big((x-u)-(y-u)\big)-F\big((x-u)+(y-u)\big) \nonumber \\
&=&2\big(F(x+u)+F(y+u)\big)-2\big(F(x-u)+F(y-u)\big) \nonumber \\
&=&8B(x,u)+8B(y,u). \nonumber
\end{eqnarray}
Since $G$ is uniquely 2--divisible, this implies
\begin{equation}\label{5}
B(x+y,2u)=2B(x,u)+2B(y,u).
\end{equation}
Inserting $y=0$ and $x=z$ yields
$$B(z,2u)=2B(z,u).$$
If we put $x+y$ instead of $z$, using (\ref{5}) we get
$$B(x+y,u)=B(x,u)+B(y,u).$$
Hence, $B$ is biadditive on $W_0 \times W_0.$ Finally,
$$f(x)=T(x)+F(x)=T(x)+B(x,x)$$
for all $x \in W_0.$ Notice that, for all $x,y \in W_0,$ $x \perp y$ implies
\begin{eqnarray}
&&2B(x,y)=B(x,y)+B(y,x) \nonumber \\
&=&B(x+y,x+y)-B(x,x)-B(y,y) \nonumber \\
&=&\big(f(x+y)-T(x+y)\big)-\big(f(x)-T(x)\big)-\big(f(y)-T(y)\big) \nonumber \\
&=&\big(f(x+y)-f(x)-f(y)\big)-\big(T(x+y)-T(x)-T(y)\big)=0. \nonumber
\end{eqnarray}
Hence $B(x,y)=0$, namely, $B$ is $\perp$-perserving on $W_0\times W_0$.
\end{proof}

\begin{rem}
\label{TB}
By the definitions of $T$ and $B$, it is easy to see that they are uniquely determined by $f$. Actually,
\begin{align*}
T(x)&=\frac{1}{2}\big(f(x)-f(-x)\big),\\
B(x,y)&=\frac{1}{8}\big(f(x+y)+f(-x-y)-f(x-y)-f(-x+y)\big)
\end{align*}
for all $x,y \in W.$
The reason why we only have
\[
f(x)=T(x)+B(x,x)
\]
for all $x\in W_0$, instead of $W$, is because it is only known that $F(x)=B(x,x)$ for all $x\in W_0$.
\end{rem}

\begin{lem}\label{1.3}
Let $W, V, G$ be  normed spaces, and $\perp$ be a binary relation on $W$ such that $x \perp y$ implies $(-x) \perp (-y)$.
Suppose that there are continuous linear mappings $\varphi, \psi : V \to W$ with the following properties:
\begin{equation}\label{ab}
\varphi(V) \perp \psi(V) \textup{ and } (\varphi + \lambda \psi)(V) \perp (\varphi - \lambda \psi)(V) \textup{ for } \lambda \in \{1,i\}.
\end{equation}
Let $W_0 := \varphi(V)+\psi(V) \leq W.$

If $f : W \to G$ is a continuous $\perp$-additive mapping,
then there exist continuous mappings $T : W \to G$ and $S : W \times W \to G$
such that $T$ is additive on $W_0$, $S$ is sesquilinear on $W_0 \times W_0$
with the property that for all $x,y \in W_0$,
$x \perp y$ implies $S(x,y)+S(y,x)=0,$
and
$$f(x)=T(x)+S(x,x)\quad \text{for all}\quad x\in W_0.$$
Furthermore, if we also assume that $x \perp y$ implies $x \perp iy$, then $S$ is $\perp$-preserving on $W_0 \times W_0.$
\end{lem}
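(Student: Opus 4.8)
The plan is to first reduce to the real, biadditive situation handled by Lemma \ref{1.1}, and then to use the extra orthogonality coming from $\lambda=i$ in \eqref{ab} to install the complex structure. Since $W,V,G$ are normed spaces they are uniquely $2$--divisible abelian groups, and $\varphi,\psi$, being linear, are additive; the case $\lambda=1$ of \eqref{ab} is exactly the hypothesis \eqref{phipsi}. Hence Lemma \ref{1.1}(ii) applies and yields an additive $T:W\to G$ together with a symmetric, biadditive, $\perp$--preserving $B$ on $W_0\times W_0$ with $f(x)=T(x)+B(x,x)$ there; moreover the even part $F(x)=\tfrac{1}{2}\big(f(x)+f(-x)\big)$ satisfies $F(x)=B(x,x)$ on $W_0$ (cf.\ Remark \ref{TB}). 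All of $T,F,B$ are continuous because $f$ is, and I will use continuity later to upgrade additivity to $\mathbb{R}$--homogeneity. Note also that $W_0$ is a complex subspace, since $\varphi,\psi$ are linear.

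Next I would isolate the one genuinely new fact: the even part is invariant under multiplication by $i$ on $W_0$, i.e.\ $F(iw)=F(w)$, equivalently $B(iw,iw)=B(w,w)$. This is not optional, since a sesquilinear $S$ with $S(x,x)=F(x)$ can exist only if $F(ix)=S(ix,ix)=i\bar i\,S(x,x)=F(x)$; so it is the heart of the matter and, I expect, the main obstacle. To prove it I would work on generators. Using that $B$ is symmetric and $\perp$--preserving, the relation $(\varphi+\psi)(c)\perp(\varphi-\psi)(c)$ expands to $F(\varphi(c))=F(\psi(c))$, while $(\varphi+i\psi)(c)\perp(\varphi-i\psi)(c)$ expands, the two mixed terms cancelling by symmetry, to $F(\varphi(c))=F(i\psi(c))$. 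The crucial point is that $\varphi,\psi$ are \emph{complex} linear, so $i\varphi(c)=\varphi(ic)$ and $i\psi(c)=\psi(ic)$ remain inside the controlled families; evaluating the two identities at $c$ and at $ic$ then gives $F(\psi(ic))=F(\psi(c))$ and $F(\varphi(ic))=F(\varphi(c))$, that is, $i$--invariance of $F$ on each of $\varphi(V)$ and $\psi(V)$. Finally, for a general $w=\varphi(a)+\psi(b)$ the cross term vanishes by $\varphi(V)\perp\psi(V)$, so $F(w)=F(\varphi(a))+F(\psi(b))$, and likewise $F(iw)=F(\varphi(ia))+F(\psi(ib))$, whence $F(iw)=F(w)$ on all of $W_0$.

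With $i$--invariance in hand I would define the complex polarization $S(x,y)=B(x,y)+iB(x,iy)$ on $W_0$. Biadditivity of $S$ is inherited from $B$, and a direct check gives $S(x,iy)=-iS(x,y)$, so $S$ is conjugate linear in the second variable. For the first variable, put $D(x,y)=B(ix,iy)-B(x,y)$ and $C(x,y)=B(ix,y)+B(x,iy)$; both are symmetric and biadditive, $D$ has vanishing diagonal by $i$--invariance, hence $D\equiv0$ as $G$ is $2$--divisible, and then $C(x,y)=-D(x,iy)\equiv0$ because $iy\in W_0$. These two identities yield $S(ix,y)=iS(x,y)$ and also $B(x,ix)=0$, so $S(x,x)=B(x,x)=F(x)$ and $f(x)=T(x)+S(x,x)$ on $W_0$. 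Continuity promotes biadditivity to $\mathbb{R}$--bilinearity, which together with the relations $S(ix,y)=iS(x,y)$ and $S(x,iy)=-iS(x,y)$ makes $S$ sesquilinear.

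It then remains to record the orthogonality properties. For $x\perp y$ in $W_0$ one computes $S(x,y)+S(y,x)=2B(x,y)+iC(x,y)=0$, using $B(x,y)=0$ (as $B$ is $\perp$--preserving) and $C\equiv0$. For the final assertion, the extra hypothesis that $x\perp y$ implies $x\perp iy$ forces $B(x,iy)=0$ as well, so $S(x,y)=B(x,y)+iB(x,iy)=0$, i.e.\ $S$ is $\perp$--preserving on $W_0\times W_0$. The only recurring subtlety is the bookkeeping with the complex linearity of $\varphi,\psi$, which is precisely what confines each $iw$ to $W_0$ and lets the $\lambda=i$ orthogonality do its work.
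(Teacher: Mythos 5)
Your proof is correct and follows essentially the same route as the paper: reduce to Lemma \ref{1.1}(ii), use the $\lambda\in\{1,i\}$ orthogonality in \eqref{ab} to make $B$ invariant under multiplication by $i$ on $W_0$, and then complexify via $S(x,y)=B(x,y)+iB(x,iy)$, with continuity upgrading biadditivity to $\mathbb{R}$-bilinearity and hence sesquilinearity. The only deviations are cosmetic: the paper obtains $B(ix,iy)=B(x,y)$ for all $x,y\in W_0$ directly from the generator relations, whereas you first prove the diagonal identity $F(iw)=F(w)$ and recover the full identity by polarization (your $D\equiv 0$, $C\equiv 0$), and your final $\perp$-preserving step (from $B(x,y)=B(x,iy)=0$ conclude $S(x,y)=0$ at once) is slightly more direct than the paper's combination of the two relations $S(x,y)+S(y,x)=0$ and $-iS(x,y)+iS(y,x)=0$.
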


\begin{proof}
By Lemma \ref{1.1} (ii),
there exist mappings $T : W \to G$ and $B : W \times W \to G$ such that $T$ is additive on $W_0,$
$B$ is $\perp$-preserving symmetric biadditive on $W_0 \times W_0,$ and
$$f(x)=T(x)+B(x,x)\quad \text{for all} \quad x\in W_0.$$
Since $f$ is continuous, clearly so are $T$ and $B$ (see Remark \ref{TB}).

Since $\psi$ is linear and $B$ is $\perp$-preserving on $W_0 \times W_0,$ it follows from (\ref{ab}) that for all $x,y \in V$ and $\lambda \in \{1,i\}$ we have
\begin{eqnarray}
0 &=& B\big((\varphi+\lambda\psi)(x), (\varphi-\lambda\psi)(y)\big) \nonumber \\
&=& B\big(\varphi(x)+\lambda\psi(x), \varphi(y)-\lambda\psi(y)\big) \nonumber \\
&=& B\big(\varphi(x), \varphi(y)\big)+B\big(\psi(\lambda x), \varphi(y)\big) \nonumber \\
&&- B\big(\varphi(x), \psi(\lambda y)\big) - B\big(\lambda \psi(x), \lambda \psi(y)\big) \nonumber \\
&=& B\big(\varphi(x), \varphi(y)\big) - B\big(\lambda \psi(x), \lambda \psi(y)\big). \nonumber
\end{eqnarray}
This implies
$$B\big(i \psi(x), i \psi(y)\big) = B\big(\varphi(x), \varphi(y)\big) = B\big(\psi(x), \psi(y)\big)$$
and
\begin{align*}
B\big(i\varphi(x), i\varphi(y)\big)
&=B\big(\varphi(ix),\varphi(iy)\big)=B\big(\psi(ix), \psi(iy)\big) \nonumber \\
&=B\big(i\psi(x), i\psi(y)\big) = B\big(\varphi(x), \varphi(y)\big). \nonumber
\end{align*}
Hence,
$$B(ix,iy)=B(x,y)$$
for all $x,y \in \varphi(V) + \psi(V) = W_0.$

Since $B$ is biadditive and continuous on $W_0 \times W_0,$ it is also $\mathbb{R}$-bilinear on $W_0 \times W_0.$
Define $S : W \times W \to G$ by
\[
S(x,y)=B(x,y)+iB(x,iy).
\]
Then, for all $x,y \in W_0,$
\begin{align*}
S(ix,y) &= B(ix,y)+iB(ix,iy)=B(ix,y)+iB(x,y) \nonumber \\
&=i \big(B(x,y)-iB(ix,y)\big) = i \big(B(x,y)+iB(x,iy)\big)\\
&=iS(x,y), \nonumber
\end{align*}
and analogously
$$S(x,iy)=-iS(x,y).$$
Since the mapping $B$ is continuous $\mathbb{R}$-bilinear on $W_0 \times W_0,$ the mapping $S$ is continuous $\mathbb{R}$-bilinear on $W_0 \times W_0$ as well.
However, from the above we conclude that $S$ is continuous sesquilinear on $W_0 \times W_0.$

Also, notice that
\begin{align*}
S(x,y)+S(y,x)
&=B(x,y)+iB(x,iy)+B(y,x)+iB(y,ix) \nonumber \\
&=2B(x,y)+iB(x,iy)+iB(ix,y)=2B(x,y) \nonumber
\end{align*}
for all $x,y \in W_0.$
In particular,  we get
$$S(x,x)=B(x,x)\quad \text{for all} \quad x\in W_0.$$
Then $S$ is a continuous sesquilinear mapping on $W_0 \times W_0$ with the properties that for all $x,y \in W_0,$
$$x \perp y \implies S(x,y)+S(y,x)=0$$
and
$$f(x)=T(x)+S(x,x) \quad \text{for all} \quad x\in W_0.$$

Furthermore, if $x \perp y$ implies $x \perp iy,$ then for all $x,y \in W_0$ satisfying $x \perp y$ we have
$$S(x,y)+S(y,x)=0 \textup{ and } -iS(x,y)+iS(y,x)=0.$$
Hence $S(x,y)=0,$ that is, $S$ is $\perp$-preserving on $W_0 \times W_0.$
\end{proof}

\medskip

By the definition of $S$ and Remark \ref{TB}, we conclude that $S$ is also uniquely determined by $f$ and
\begin{align*}
S(x,y)
&=\frac{1}{8}\big(f(x+y)+if(x+iy)-f(x-y)-if(x-iy) \nonumber \\
&\quad + f(-x-y)+if(-x-iy)-f(-x+y)-if(-x+iy)\big) \nonumber
\end{align*}
for all $x,y \in W.$

It should be also mentioned that the mapping $T$ is $\mathbb{R}$-linear on $W_0$ since it is continuous and additive on $W_0,$
but it is not $\mathbb{C}$-linear in general.

\section{O.~A.~M.~on Hilbert modules}\label{modules}

Let $(W, \langle \, . \, , \, . \, \rangle)$ be a Hilbert $\A$-module and let $G$ be an abelian group.
We shall study $\perp$-additive mappings and $\perp$-preserving mappings for the binary relation $\perp$ on $W$ given by
$$x \perp y \Longleftrightarrow \langle x,y \rangle =0.$$
A mapping $f : W \to G$ is said to be \textit{orthogonally additive} if
$$\langle x,y \rangle = 0 \implies f(x+y)=f(x)+f(y).$$
A mapping $B : W \times W \to G$ is said to be \textit{orthogonality preserving} if
$$\langle x,y \rangle =0 \implies B(x,y)=0.$$

A \textit{morphism} between Hilbert $\A$-modules $V$ and $W$ is a mapping
$\varphi : V \to W$ satisfying $\langle \varphi(x), \varphi(y) \rangle = \langle x,y \rangle$
for all $x,y \in V.$
It is clear that morphisms are continuous mappings, and it is not difficult to verify that they are also $\A$-linear mappings, that is, linear mappings satisfying $\varphi(xa)=\varphi(x)a$ for all $x \in V$ and $a \in \A.$

\begin{thm}\label{2.1}
Let $W$ be a Hilbert $\A$-module, $V$ be a submodule of $W$,
and $\varphi : V \to W$ be a morphism such that $\varphi(V) \subseteq V^{\perp}.$
Let $W_0 := V \oplus \varphi(V) \leq W.$
Suppose that $G$ is a uniquely 2--divisible abelian group and that $f : W \to G$ is an o.~a.~m.
Then the following holds:

(i) There exist mappings $T : W \to G$
and $B : W \times W \to G$ such that $T$ is additive on $W_0,$ $B$ is symmetric biadditive orthogonality preserving on $W_0 \times W_0,$ and
$$f(x)=T(x)+B(x,x)\quad \text{for all}\quad x \in W_0.$$

(ii) If $G$ is a normed space and $f$ is continuous, then there exist continuous mappings
$T : W \to G$ and  $S : W \times W \to G$ such that $T$ is additive on $W_0,$ $S$ is sesquilinear
orthogonality preserving on $W_0 \times W_0$,  and
$$f(x)=T(x)+S(x,x)\quad \text{for all}\quad x \in W_0.$$
\end{thm}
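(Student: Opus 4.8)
The plan is to obtain both parts by specializing the abstract machinery of Section~\ref{general}, with the orthogonality relation $x\perp y\Leftrightarrow\langle x,y\rangle=0$. First I would record the two structural features of this $\perp$ that the lemmas demand: since $\langle -x,-y\rangle=\langle x,y\rangle$ we have $x\perp y\implies(-x)\perp(-y)$, and since $\langle x,iy\rangle=i\langle x,y\rangle$ we have $x\perp y\implies x\perp iy$. The decisive choice is to take the two maps $V\to W$ to be the inclusion $\iota:V\hookrightarrow W$ and the given morphism $\varphi$. Both are $\A$-linear, hence $\mathbb{C}$-linear and additive, and a morphism is automatically continuous; their ranges sum to $\iota(V)+\varphi(V)=V\oplus\varphi(V)=W_0$, the sum being direct precisely because $\varphi(V)\subseteq V^{\perp}$. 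Since $V$ is a submodule, it is a complex vector space and so uniquely $2$--divisible, as required.

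For part (i) I would check the hypotheses (\ref{phipsi}) of Lemma~\ref{1.1} for the pair $(\iota,\varphi)$. The condition $\iota(V)\perp\varphi(V)$ is exactly $\varphi(V)\subseteq V^{\perp}$ together with $\langle x,y\rangle^*=\langle y,x\rangle$. For the second condition, for $x,y\in V$ one expands
\[
\langle x+\varphi(x),\,y-\varphi(y)\rangle
=\langle x,y\rangle-\langle x,\varphi(y)\rangle+\langle\varphi(x),y\rangle-\langle\varphi(x),\varphi(y)\rangle.
\]
The two cross terms vanish because $\varphi(V)\perp V$, while the outer terms cancel because $\varphi$ preserves the inner product, $\langle\varphi(x),\varphi(y)\rangle=\langle x,y\rangle$; hence $(\iota+\varphi)(V)\perp(\iota-\varphi)(V)$. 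As $x\perp y\implies(-x)\perp(-y)$, Lemma~\ref{1.1}(ii) then supplies the desired $T$, additive on $W_0$, and the symmetric biadditive orthogonality-preserving $B$ with $f(x)=T(x)+B(x,x)$ on $W_0$.

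For part (ii) I would verify instead the hypotheses (\ref{ab}) of Lemma~\ref{1.3} for the same pair, now viewed as continuous linear maps between normed spaces. The case $\lambda=1$ is the computation above; the only new point is $\lambda=i$: for $x,y\in V$,
\[
\langle x+i\varphi(x),\,y-i\varphi(y)\rangle
=\langle x,y\rangle-i\langle x,\varphi(y)\rangle+\ol{i}\,\langle\varphi(x),y\rangle-\ol{i}\,i\,\langle\varphi(x),\varphi(y)\rangle,
\]
where conjugate-linearity in the first slot produces the factor $\ol{i}\,i=1$, so the outer terms again cancel and the cross terms again vanish, giving $(\iota+i\varphi)(V)\perp(\iota-i\varphi)(V)$. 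Lemma~\ref{1.3} then yields continuous $T$ (additive on $W_0$) and continuous sesquilinear $S$ on $W_0\times W_0$ with $f(x)=T(x)+S(x,x)$; and because $x\perp y\implies x\perp iy$, the ``furthermore'' clause of Lemma~\ref{1.3} upgrades $S$ to an orthogonality-preserving map, which is exactly the asserted form.

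I do not expect a genuine obstacle: the whole argument is the translation of the module data into the abstract framework, and essentially everything reduces to the two inner-product expansions above. The one point requiring care is the conjugate-linearity of $\langle\,\cdot\,,\,\cdot\,\rangle$ in its first argument, which must be tracked correctly in the $\lambda=i$ case so that the factor $\ol{i}\,i$ is $1$ and the terms cancel as in the real case.
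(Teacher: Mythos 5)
Your proposal is correct and is essentially the paper's own proof: the paper likewise feeds the pair consisting of the morphism $\varphi$ and the identity (inclusion) map of $V$ into Lemma \ref{1.1}(ii) and Lemma \ref{1.3}, verifying hypotheses (\ref{phipsi}) and (\ref{ab}) by the same one-line expansion $\langle(\varphi+\lambda\cdot\mathrm{id})(x),(\varphi-\lambda\cdot\mathrm{id})(y)\rangle=\langle\varphi(x),\varphi(y)\rangle-\langle x,y\rangle=0$ for $\lambda\in\{1,i\}$, the cross terms vanishing by $\varphi(V)\subseteq V^{\perp}$ and $|\lambda|^2=1$ absorbing the conjugation. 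The only (immaterial) difference is that you interchange which of the two maps plays $\varphi$ versus $\psi$ in the lemmas, and you spell out the routine checks (continuity of the morphism, unique $2$--divisibility of $V$, and $x\perp y\implies(-x)\perp(-y)$, $x\perp iy$) that the paper leaves implicit.
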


\begin{proof}
Set $x \perp y$ if and only if $\langle x,y \rangle =0.$ Let $id : V \to V$ be the identity mappping.
Since $\varphi(V) \subseteq V^{\perp},$ we have $\langle \varphi(V), id(V) \rangle =0.$
Furthermore, for all $x,y \in V,$ and $\lambda \in \{1,i\},$
$$\langle (\varphi+\lambda \cdot id)(x), (\varphi- \lambda \cdot id)(y) \rangle = \langle \varphi(x), \varphi(y) \rangle - \langle x, y \rangle =0.$$
Then (i) and (ii)  follow from Lemma \ref{1.1} (ii) and Lemma \ref{1.3}, respectively.
\end{proof}


We write $U \sim V$ if $U$ and $V$ are unitarily equivalent Hilbert $C^*$-modules over a $C^*$-algebra $\A,$ that is,
if there exists a mapping $u : U \to V$ such that there is a mapping $u^* : V \to U$ satisfying $\langle ux, y \rangle = \langle x, u^*y \rangle$
for all $x \in U,$ $y \in V,$ and
$$u^*u=id_U, \qquad uu^*=id_V.$$
It is clear that $u$ is surjective and $\langle u(x), u(y) \rangle = \langle x, y \rangle$ for all $x,y \in U.$

A closed submodule $V$ of a Hilbert $C^*$-module $W$ is said to be \textit{complemented} if $W=V\oplus V^{\perp},$
and \textit{fully complemented} if $V$ is complemented and $V^{\perp} \sim W.$

\begin{cor}\label{2.2}
Let $V$ be a fully complemented submodule of a Hilbert $C^*$-module $W,$
$G$ be a uniquely 2--divisible abelian group, and $f : W \to G$ be an o.~a.~m.
Then there exist mappings $T : W \to G$ and $B : W \times W \to G$ such that $T$ is additive on $V,$  $B$ is  symmetric biadditive orthogonality preserving
on $V \times V,$ and
$$f(x)=T(x)+B(x,x)\quad \text{for all}\quad x \in V.$$

Furthermore, if $G$ is a normed space and $f$ is continuous then there exist a continuous mapping $T : W \to G$ which is
additive on $V$, and a continuous mapping $S : W \times W \to G,$ which is sesquilinear and orthogonality preserving on $V \times V,$ such that
$$f(x)=T(x)+S(x,x)\quad \text{for all}\quad x \in V.$$
\end{cor}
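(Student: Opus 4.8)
The plan is to deduce the corollary directly from Theorem~\ref{2.1} by extracting, from the full-complementation hypothesis, a morphism $\varphi : V \to W$ whose image lands inside $V^{\perp}$; once such a $\varphi$ is in hand, Theorem~\ref{2.1} supplies the representation on $W_0 := V \oplus \varphi(V)$, and since $V \subseteq W_0$ the representation simply restricts to $V$, which is all the corollary asserts.

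First I would unwind the definition of fully complemented. Since $V$ is fully complemented we have $W = V \oplus V^{\perp}$ and $V^{\perp} \sim W$, so there is a unitary equivalence $u : V^{\perp} \to W$ together with $u^* : W \to V^{\perp}$ satisfying $u^*u = id_{V^{\perp}}$ and $uu^* = id_W$ and $\langle ua, b \rangle = \langle a, u^* b \rangle$ for $a \in V^{\perp},\ b \in W$. Setting $a = u^*x$ and using $uu^* = id_W$ gives $\langle u^*x, u^*y \rangle = \langle u(u^*x), y \rangle = \langle x, y \rangle$ for all $x,y \in W$, so $u^*$ is itself a morphism (this is just the symmetry of the relation $\sim$). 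Now define $\varphi := u^*|_V : V \to W$. Because $u^*$ maps $W$ onto $V^{\perp}$ we have $\varphi(V) \subseteq V^{\perp}$, and the identity above restricted to $V$ shows $\langle \varphi(x), \varphi(y) \rangle = \langle x, y \rangle$, so $\varphi$ is a morphism with $\varphi(V) \subseteq V^{\perp}$. This is precisely the input required by Theorem~\ref{2.1}; note also that the sum $W_0 := V \oplus \varphi(V)$ is a genuine orthogonal direct sum, since $\varphi(V) \subseteq V^{\perp}$ and $V \cap V^{\perp} = \{0\}$ by property (iv) of the inner product.

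With $\varphi$ and $W_0$ fixed, I would apply Theorem~\ref{2.1} to $V$, $\varphi$, and the o.~a.~m.~$f$. Part~(i) yields $T : W \to G$ additive on $W_0$ and $B : W \times W \to G$ symmetric biadditive orthogonality preserving on $W_0 \times W_0$ with $f(x) = T(x) + B(x,x)$ for all $x \in W_0$; since $V \subseteq W_0$ and $V \times V \subseteq W_0 \times W_0$, every one of these properties descends to give the first assertion of the corollary verbatim on $V$. For the second assertion, under the extra hypotheses that $G$ is normed and $f$ is continuous, I would instead invoke Theorem~\ref{2.1}(ii), which provides continuous $T$ and a continuous sesquilinear orthogonality-preserving $S$ on $W_0 \times W_0$ with $f(x) = T(x) + S(x,x)$; restricting to $V$ in exactly the same way gives the stated representation. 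There is no real analytic difficulty here, as Theorem~\ref{2.1} already absorbs the work; the only point demanding care is the bookkeeping of the equivalence $V^{\perp} \sim W$ — one wants a morphism \emph{into} $V^{\perp}$, so it is $u^*$ rather than $u$ that must be restricted to $V$ — together with the observation that $V$ sits inside $W_0$, which is what lets a representation valid on $W_0$ collapse to the representation on $V$ claimed by the corollary.
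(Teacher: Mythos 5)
Your proof is correct and takes essentially the same route as the paper: the paper's proof likewise extracts from the full complementation an inner-product-preserving map $u : W \to V^{\perp}$, sets $\varphi = u\vert_V$, and applies Theorem~\ref{2.1}, with the representation on $W_0 = V \oplus \varphi(V)$ restricting to $V$. Your only addition is the explicit verification that it is the adjoint $u^*$ of the unitary $u : V^{\perp} \to W$ that serves as this map and that it preserves inner products, a point the paper leaves implicit by asserting such an operator $W \to V^{\perp}$ directly.
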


\begin{proof}
Since $V$ is a fully complemented submodule of $W$ there exists a linear operator $u : W \to V^{\perp}$ such that $\langle u(x), u(y) \rangle = \langle x, y \rangle$ for all $x,y \in W.$
Set $\varphi = u \vert_V.$
Then $\varphi : V \to V^{\perp}$ satisfies $\langle \varphi(x), \varphi(y) \rangle = \langle x, y \rangle$ for all $x,y \in V.$
So it remains to apply Theorem \ref{2.1}.
\end{proof}






\section{O.~A.~M.~on Hilbert $\K(\H)$-modules and Hilbert $\H\S(\H)$-modules}\label{K(H)&HS(H)}

In this section, \textbf{$\A$ always denotes $\K(\H)$ or $\H\S(\H)$}.
Let $W$ be a Hilbert $\A$-module and $e \in \A$ be a rank one projection in $\A$. Then there exists an orthonormal basis $\{w_i \, : \, i \in I\}$ for $W$ such that $\langle w_i, w_i \rangle = e$ for all $i \in I$ (see \cite[Remark 4 (d)]{bg2} for Hilbert $\K(\H)$-modules, and \cite[Proposition 1.5]{bg1} for Hilbert $\H\S(\H)$-modules).
The following lemma will allow us to deal with yet another suitable orthonormal basis for $W.$

\begin{lem}\label{bases}
Let
$W$ be a Hilbert $\A$-module with $\dim_{\A} W \leq \dim \H,$ and
$\{\xi_i \, : \, i \in I\}$ be an orthonormal basis for $\H.$
Then there exists an orthonormal basis $\{w_i \, : \, i \in J \subseteq I\}$ for $W$ such that $\langle w_i, w_i \rangle = \xi_i \otimes \xi_i$ for all $i \in J.$
\end{lem}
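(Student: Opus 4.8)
The plan is to transport a ``constant-diagonal'' orthonormal basis of $W$ onto the prescribed family of rank one projections by right multiplication with suitable rank one operators. First I would fix a unit vector, say $\xi_{i_1}$ for some $i_1 \in I$, and set $e = \xi_{i_1} \otimes \xi_{i_1}$, a rank one (hence minimal) projection in $\A$. By \cite[Remark 4 (d)]{bg2} (for $\K(\H)$) and \cite[Proposition 1.5]{bg1} (for $\H\S(\H)$) there is an orthonormal basis $\{v_k : k \in I_0\}$ of $W$ with $\langle v_k, v_k \rangle = e$ for all $k$, where $|I_0| = \dim_{\A} W$. Since $\dim_{\A} W \le \dim \H = |I|$, I may choose an injection $\iota : I_0 \to I$, put $J = \iota(I_0) \subseteq I$, and write $k(i) = \iota^{-1}(i)$ for $i \in J$.

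For each $i \in J$ the operator $a_i := \xi_{i_1} \otimes \xi_i$ is a rank one element of $\A$, so $w_i := v_{k(i)} a_i$ is a well-defined element of $W$. Using the rank one calculus $(\xi \otimes \eta)(\zeta \otimes \mu) = (\zeta, \eta)\, \xi \otimes \mu$ (a direct consequence of the convention $(\xi\otimes\eta)(\nu)=(\nu,\eta)\xi$) one checks $a_i^* = \xi_i \otimes \xi_{i_1}$, $e a_i = a_i$, $a_i^* e a_i = \xi_i \otimes \xi_i$ and $a_i a_i^* = e$. Then properties (ii) and (iii) of the inner product give
\[
\langle w_i, w_i \rangle = a_i^* \langle v_{k(i)}, v_{k(i)} \rangle a_i = a_i^* e a_i = \xi_i \otimes \xi_i,
\]
which is a rank one, hence minimal, projection, exactly as required. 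Orthogonality is then immediate: for $i \neq i'$ in $J$ we have $k(i) \neq k(i')$, so
\[
\langle w_i, w_{i'} \rangle = a_i^* \langle v_{k(i)}, v_{k(i')} \rangle a_{i'} = 0.
\]

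The one point that needs care, and which I expect to be the only real obstacle, is that $\{w_i : i \in J\}$ still generates a \emph{dense} submodule of $W$, i.e.\ that right multiplication by $a_i$ loses no information. Here I would use the recovery identity: since $\langle v_{k(i)}, v_{k(i)} \rangle = e$ is a projection, the property recalled in the preliminaries (namely $w\langle w,w\rangle = w$ whenever $\langle w,w\rangle$ is a projection) yields $v_{k(i)} e = v_{k(i)}$, whence
\[
w_i a_i^* = v_{k(i)} (a_i a_i^*) = v_{k(i)} e = v_{k(i)}.
\]
Thus each $v_k$ lies in the submodule generated by $\{w_i : i \in J\}$; as $\{v_k : k \in I_0\}$ already generates a dense submodule of $W$, so does $\{w_i : i \in J\}$. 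Combined with the two displays above, this shows that $\{w_i : i \in J\}$ is an orthonormal basis of $W$ with $\langle w_i, w_i \rangle = \xi_i \otimes \xi_i$ for all $i \in J$, completing the argument. Everything apart from this density step is a routine computation in the rank one calculus.
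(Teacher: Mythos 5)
Your proposal is correct and is essentially the paper's own argument: both fix a rank one projection $e=\xi_{i_1}\otimes\xi_{i_1}$, take a constant-diagonal orthonormal basis $\{v_k\}$ with $\langle v_k,v_k\rangle=e$, and pass to $w_i=v_{k(i)}(\xi_{i_1}\otimes\xi_i)$, with the same rank one computations for $\langle w_i,w_i\rangle=\xi_i\otimes\xi_i$ and pairwise orthogonality. The only (harmless) divergence is at the final step: the paper verifies the reconstruction formula $x=\sum_i w_i\langle w_i,x\rangle$ and invokes \cite[Theorem 1]{bg2}, whereas you recover $v_{k(i)}=w_ia_i^*$ (using $v_{k(i)}e=v_{k(i)}$) to see directly that the $w_i$ generate a dense submodule, which is an equally valid and arguably slightly more economical way to conclude.
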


\begin{proof}
Let us fix an arbitrary $j_0 \in I.$ Then
there exists an orthonormal basis $\{g_i \, : \, i \in J\}$ for $W$ such that $\langle g_i, g_i \rangle = \xi_{j_0} \otimes \xi_{j_0}$ for all $i \in J.$
Since $\dim_{\A} W \leq \dim \H$, we assume $J \subseteq I.$
Define, for all $i \in J,$
$$w_i = g_i(\xi_{j_0} \otimes \xi_i).$$
Then $\langle w_i, w_j \rangle = 0$ if $i \neq j$ and
\begin{eqnarray}
\langle w_i, w_i \rangle &=& \langle g_i(\xi_{j_0} \otimes \xi_i), g_i(\xi_{j_0} \otimes \xi_i) \rangle \nonumber \\
&=& (\xi_i \otimes \xi_{j_0})\langle g_i, g_i \rangle (\xi_{j_0} \otimes \xi_i) \nonumber \\
&=& (\xi_i \otimes \xi_{j_0})(\xi_{j_0} \otimes \xi_{j_0})(\xi_{j_0} \otimes \xi_i) \nonumber\\
&=& \xi_i \otimes \xi_i \nonumber
\end{eqnarray}
for all $i \in J.$
Furthermore, for all $x \in W,$
\begin{eqnarray}
x &=& \sum_{i \in J} g_i \langle g_i, x \rangle \nonumber \\
&=& \sum_{i \in J} g_i \langle g_i(\xi_{j_0} \otimes \xi_{j_0}), x \rangle \nonumber \\
&=& \sum_{i \in J} g_i(\xi_{j_0} \otimes \xi_{j_0})\langle g_i, x \rangle \nonumber \\
&=& \sum_{i \in J} g_i(\xi_{j_0} \otimes \xi_i)(\xi_i \otimes \xi_{j_0})\langle g_i, x \rangle \nonumber \\
&=& \sum_{i \in J} g_i(\xi_{j_0} \otimes \xi_i)\langle g_i(\xi_{j_0} \otimes \xi_i), x \rangle \nonumber\\
&=& \sum_{i \in J} w_i \langle w_i, x \rangle. \nonumber
\end{eqnarray}
By \cite[Theorem 1]{bg2}, $\{w_i \, : \, i \in J\}$ is an orthonormal basis for $W.$
\end{proof}

\begin{rem}\label{rem3.2}
Let $\H$ be a Hilbert space with $\dim\H=\aleph_0$,
$W$ be a Hilbert $\A$-module such that $\dim_{\A} W = \aleph_0$, and
$\{\xi_i \, : \, i \in \bN\}$ be an orthonormal basis for $\H.$
By Lemma \ref{bases} there exists an orthonormal basis $\{w_i \, : \, i \in \bN\}$ for $W$ such that $\langle w_i, w_i \rangle = \xi_i \otimes \xi_i$ for all $i \in \bN.$ Let $a \in \A.$
Since
\begin{align*}
&\Vert \sum_{i=m}^n w_i a \Vert_W^2
= \Vert \langle \sum_{i=m}^n w_i a, \sum_{i=m}^n w_i a \rangle \Vert \nonumber \\
=& \Vert \sum_{i=m}^n a^* \langle w_i, w_i \rangle a \Vert
= \Vert \sum_{i=m}^n a^* (\xi_i \otimes \xi_i) a \Vert, \nonumber
\end{align*}
and $\sum_{i=1}^{\infty} a^* (\xi_i \otimes \xi_i) a = a^*a,$ the sequence $(\sum_{i=1}^n w_ia)_{n=1}^\infty$ is a Cauchy sequence in $W,$ so it converges.
Hence, for all $a \in \A$, $\sum_{i=1}^{\infty} w_ia \in W$ and $\langle \sum_{i=1}^{\infty} w_ia, \sum_{i=1}^{\infty} w_ia \rangle = a^*a.$
\end{rem}

Before giving the main result of this section, we provide a representation result of sesquilinear orthogonality preserving mappings $S:W\times W\to G$.
This result is of independent interest.
\begin{prop}
\label{P:repS}
Let $W$ be a Hilbert $\A$-module and $G$ be a normed space. If $S:W\times W\to G$ is a continuous sesquilinear orthogonality preserving
mapping, then there is a unique linear mapping $\Phi:\langle W, W\rangle \to G$ such that
\[
S(x,y)=\Phi(\langle y, x\rangle) \quad \text{for all}\quad x,y\in W.
\]

Furthermore, if $\H$ is finite dimensional or $\dim{\H}=\dim_{\A}W=\aleph_0,$ then $\Phi$ can be extended to a continuous linear mapping on $\A$.
\end{prop}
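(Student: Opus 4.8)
The plan is to construct the linear mapping $\Phi$ directly from the structure of the inner product on $W$. Since $S$ is orthogonality preserving and sesquilinear, the key observation is that $S(x,y)$ should depend only on the inner product $\langle y, x\rangle$. To make this precise, I would first show that $S(x,y)=S(x',y')$ whenever $\langle y,x\rangle = \langle y',x'\rangle$. The cleanest route exploits the orthonormal basis structure guaranteed for Hilbert $\A$-modules over $\K(\H)$ or $\H\S(\H)$. Fix a rank one projection $e=\xi\otimes\xi\in\A$ and a basis element $w$ with $\langle w,w\rangle=e$; then $we=w$ by the property recorded in the preliminaries. For elements of the form $wa$ and $wb$ with $a,b\in\A$, sesquilinearity gives $S(wa,wb)=S(w,w)$ evaluated through the scalars, but more usefully $\langle wb, wa\rangle = b^*\langle w,w\rangle a = b^* e a$, suggesting $\Phi$ should be defined on inner products $\langle x,y\rangle\in\langle W,W\rangle$ by $\Phi(\langle y,x\rangle):=S(x,y)$.

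The heart of the argument is proving this is well-defined and linear. First I would verify well-definedness: if $\langle y,x\rangle = \langle y',x'\rangle$ I must show $S(x,y)=S(x',y')$. I would reduce to showing that $\langle y,x\rangle=0$ forces $S(x,y)=0$, which is precisely the orthogonality preserving hypothesis $\langle x,y\rangle=0\implies S(x,y)=0$ combined with $\langle x,y\rangle = \langle y,x\rangle^*$; so $\langle y,x\rangle=0$ iff $\langle x,y\rangle=0$. The linearity of $\Phi$ on $\langle W,W\rangle$ then follows from sesquilinearity of $S$: additivity in each slot of the inner product transfers to additivity of $\Phi$, and the scalar behavior $\langle y,\lambda x\rangle = \lambda\langle y,x\rangle$ matches $S(\lambda x,y)=\lambda S(x,y)$. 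The main technical point is that every element of $\langle W,W\rangle$ arising as a finite sum $\sum\langle y_k,x_k\rangle$ must be handled consistently; I would check that if $\sum\langle y_k,x_k\rangle=0$ then $\sum S(x_k,y_k)=0$, which again reduces to the orthogonality preserving property after a polarization-type manipulation.

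For the final extension claim, the plan is to produce an a priori bound $\norm{\Phi(a)}\leq C\norm{a}$ for $a\in\langle W,W\rangle$, which then extends $\Phi$ to the closure $\A$ by density (recall $\langle W,W\rangle$ is dense when $W$ is full, and in general one extends on the closed ideal it generates). In the case $\dim_{\A}W=\dim\H=\aleph_0$, I would use Remark \ref{rem3.2}: there is an orthonormal basis $\{w_i\}$ with $\langle w_i,w_i\rangle=\xi_i\otimes\xi_i$, and for $a\in\A$ the element $x_a:=\sum_i w_i a$ lies in $W$ with $\langle x_a, x_a\rangle = a^*a$. This realizes every positive element $a^*a$, hence every element of $\A$ (via the four-positive-elements decomposition), as a genuine inner product $\langle x_a,x_a\rangle$, so that $\Phi(a^*a)=S(x_a,x_a)$ and continuity of $S$ yields $\norm{\Phi(a^*a)}\leq \norm{S}\,\norm{x_a}_W^2 = \norm{S}\,\norm{a^*a}$. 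In the finite-dimensional case the same construction works directly since $\langle W,W\rangle=\A$ and no completion is needed.

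The step I expect to be the main obstacle is establishing the norm bound in the infinite-dimensional case uniformly over all of $\A$, rather than merely on positive elements or on single inner products. Bounding $\Phi$ on a general $a=\langle y,x\rangle$ requires controlling $\norm{S(x,y)}$ by $\norm{\langle y,x\rangle}$ alone, not by $\norm{x}_W\norm{y}_W$, and these differ: one can have $\norm{\langle y,x\rangle}$ small while $\norm{x}_W,\norm{y}_W$ are large. The resolution should come from the realization in Remark \ref{rem3.2} together with a polar-decomposition argument to write an arbitrary inner product as a controlled combination of terms $\langle x_a, x_a\rangle$, extracting from the Hilbert-module structure that $S$ factors through the inner product with the correct scaling; verifying that this factorization respects the operator norm on $\A$ is where the real work lies.
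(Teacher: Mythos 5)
There is a genuine gap, and it sits precisely at the point you defer: the well-definedness and linearity of $\Phi$ on the linear span $\langle W,W\rangle$. Your proposed reduction is incorrect. From $\langle y,x\rangle=\langle y',x'\rangle$ one does not obtain a single orthogonal pair to which the hypothesis applies: the natural manipulation gives $S(x,y)-S(x',y')=S(x-x',y)+S(x',y-y')$, where only the \emph{sum} $\langle y,x-x'\rangle+\langle y-y',x'\rangle$ vanishes, while the individual inner products need not; the hypothesis ``$\langle x,y\rangle=0\Rightarrow S(x,y)=0$'' says nothing about such sums. Likewise, no polarization identity converts the implication you actually need, namely $\sum_k\langle y_k,x_k\rangle=0\Rightarrow\sum_k S(x_k,y_k)=0$, into instances of single-pair orthogonality. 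That implication is essentially the entire content of the proposition, and nothing in your outline proves it. Your first-paragraph remark that for $x=wa$, $y=wb$ ``sesquilinearity gives $S(wa,wb)=S(w,w)$ evaluated through the scalars'' is symptomatic of the same problem: $a,b$ are elements of $\A$, not scalars, and sesquilinearity cannot move them out of the arguments of $S$.

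The paper closes exactly this gap by \emph{constructing} $\Phi$ rather than defining it by $\Phi(\langle y,x\rangle):=S(x,y)$ and then trying to verify consistency. Fix an orthonormal basis $\{\xi_i\}$ of $\H$ and the minimal projections $e_i=\xi_i\otimes\xi_i$. For each $i_0$ choose an orthonormal basis $\{w_j\}$ of $W$ with $\langle w_j,w_j\rangle=e_{i_0}$; the identity $\langle w_ja-w_ka,\,w_jb+w_kb\rangle=0$ shows that $\Phi_{i_0}(a):=S(w_ka,w_k)$ does not depend on $k$, and then---this is the decisive step for which your plan has no substitute---minimality, i.e.\ $e_{i_0}\A e_{i_0}=\bC e_{i_0}$, converts the module coefficients $e_{i_0}\langle y,w_j\rangle e_{i_0}$ into scalars $\lambda_j$, which sesquilinearity \emph{can} transport between the two arguments of $S$; together with the vanishing of the cross terms $S(w_ja,w_{j'}b)$ for $j\neq j'$, this yields $\Phi_{i_0}(\langle ye_{i_0},x\rangle)=S(x,ye_{i_0})$ and hence $S(x,y)=\sum_{i}\Phi_i(\langle ye_i,x\rangle)$ as in \eqref{gl}. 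Existence, well-definedness on the span, and linearity of $\Phi$ all drop out of this explicit formula at once. By contrast, the step you single out as the expected main obstacle is comparatively routine: once $\Phi$ exists, Remark \ref{rem3.2} gives $\Phi(a^*a)=S\bigl(\sum_i v_ia,\sum_i v_ia\bigr)$, whence $\Vert\Phi(a^*a)\Vert\le\Vert S\Vert\,\Vert a^*a\Vert$, and boundedness on positive elements yields boundedness on $\A$ by writing any $a$ as a combination of four positives of norm at most $\Vert a\Vert$---no polar-decomposition argument is needed. So your extension paragraph essentially coincides with the paper's, but the factorization of $S$ through the inner product, which you assume, is where the real work lies, and it requires the minimal-projection structure of $\K(\H)$ and $\H\S(\H)$ that your outline never invokes.
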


\begin{proof}
Let $\{\xi_i \, : \, i \in I\}$ be an orthonormal basis for $\mathcal{H}$ and let $e_i = \xi_i \otimes \xi_i$ for all $i \in I.$
Fix an arbitrary $i_0 \in I.$
Let $\{w_j \, : \, j \in J\}$ be an orthonormal basis for $W$ such that $\langle w_j, w_j \rangle = e_{i_0}$ for all $j \in J.$
Then for all $j,k \in J,$ $j \neq k,$ and all $a,b \in \A$ we have
\begin{align*}
 \langle w_ja - w_ka, w_jb + w_kb \rangle &= a^*\langle w_j, w_j \rangle b - a^*\langle w_k, w_k \rangle b \nonumber \\
&=a^*e_{i_0}b-a^*e_{i_0}b=0, \nonumber
\end{align*}
hence
$$0=S(w_ja - w_ka, w_jb + w_kb)=S(w_ja, w_jb)-S(w_ka, w_kb).$$
In particular, for $b=e_{i_0}$, one obtains
$$S(w_ja, w_j)=S(w_ja, w_je_{i_0})=S(w_ka, w_ke_{i_0})=S(w_ka, w_k)$$
for all $j,k \in J$.
Hence the mapping $\Phi_{i_0} : \A \to G,$ defined by
$$\Phi_{i_0}(a)=S(w_ka, w_k),$$
does not depend on $k \in J.$
It is clear that $\Phi_{i_0}$ is linear.
Notice that
\begin{align}\nonumber
\Vert \Phi_{i_0}(a)\Vert
&= \Vert S(w_ka, w_k) \Vert \leq \Vert S \Vert \cdot \Vert w_k \Vert_{W}^2 \cdot \Vert a \Vert \\ \label{bound}
&= \Vert S \Vert \cdot \Vert e_{i_0} \Vert \cdot \Vert a \Vert.
\end{align}
Let us remark that if $\A=\K(\H)$ then each (minimal) projection has norm one, which is not true in general if $\A=\H\S(\H),$
but in both cases $\Phi_{i_0}$ is bounded and $\Vert \Phi_{i_0} \Vert \leq \Vert S \Vert \cdot \Vert e_{i_0} \Vert.$
Furthermore,
\begin{eqnarray}
&&\Phi_{i_0}(\langle ye_{i_0},x \rangle)=S(w_k\langle ye_{i_0},x\rangle, w_k)=\sum_{j \in J} S(w_k\langle ye_{i_0},w_j \rangle\langle w_j,x \rangle, w_k) \nonumber \\
&=&\sum_{j \in J} S(w_ke_{i_0}\langle y,w_j \rangle e_{i_0} \langle w_j,x \rangle, w_k)=\sum_{j \in J} S(w_k(\lambda_j e_{i_0}) \langle w_j,x \rangle, w_k) \nonumber \\
&=&\sum_{j \in J} \lambda_j S(w_k\langle w_j,x \rangle, w_k)=\sum_{j \in J} S(w_k\langle w_j,x \rangle, w_k(\overline{\lambda_j}e_{i_0})) \nonumber \\
&=&\sum_{j \in J} S(w_k\langle w_j,x \rangle, w_k(e_{i_0}\langle w_j, y\rangle e_{i_0})) = \sum_{j \in J} S(w_k\langle w_j,x \rangle, w_k\langle w_j, ye_{i_0}\rangle) \nonumber \\
&=&\sum_{j \in J} S(w_j\langle w_j,x \rangle, w_j\langle w_j, ye_{i_0}\rangle)= S(\sum_{j \in J} w_j\langle w_j, x \rangle, \sum_{j \in J} w_j\langle w_j, ye_{i_0}\rangle) \nonumber \\
&=&S(x, ye_{i_0}). \nonumber
\end{eqnarray}
Since $i_0 \in I$ is arbitrary, it follows that
\begin{equation}\label{gl}
S(x,y)=S(x, \sum_{i \in I}ye_i)=\sum_{i \in I}S(x, ye_i)=\sum_{i \in I}\Phi_i(\langle ye_i,x \rangle).
\end{equation}

Let us define $\Phi : \langle W, W \rangle \to G$ by
$$\Phi(a)=\sum_{i \in I}\Phi_i(e_ia).$$
By (\ref{gl}), the mapping $\Phi$ is well-defined and
$$S(x,y)=\Phi(\langle y, x \rangle)$$
for all $x,y \in W.$
Since all $\Phi_i$ are linear, $\Phi$ is linear as well. Uniqueness of such $\Phi$ is obvious.

If $\H$ is finite dimensional then $\sum_{i \in I}\Phi_i(e_ia)$ converges for all $a \in \A.$
If $\dim{\H}=\dim_{\A}W=\aleph_0$ then by Lemma \ref{bases} there exists an orthonormal basis $\{v_i \, : \, i \in I\}$ for $W$ such that $\langle v_i, v_i \rangle = \xi_i \otimes \xi_i$ for all $i \in I.$
By Remark \ref{rem3.2},
$\sum_{j \in I}v_ja \in W$ for all $a \in \A,$ so for all $a,b \in \A$ we have
\begin{eqnarray}
&&S(\sum_{j \in I}v_jb, \sum_{j \in I}v_ja^*) = \sum_{i \in I}\Phi_i(\langle \sum_{j \in I}v_ja^*e_i, \sum_{j \in I}v_jb) \nonumber \\
&=& \sum_{i \in I} \Phi_i (\sum_{j \in I}e_ia\langle v_j, v_j \rangle b) = \sum_{i \in I} \Phi_i (\sum_{j \in I}e_ia(\xi_j \otimes \xi_j)b) \nonumber \\
&=& \sum_{i \in I} \Phi_i(e_iab). \nonumber
\end{eqnarray}
Hence $\sum_{i \in I}\Phi_i(e_ia)$ converges for all $a \in \A^2=\A.$
It means that in the cases when $\dim{\H}$ is finite or $\dim{\H}=\dim_{\A}W=\aleph_0$ we can extend $\Phi$ from $\langle W, W \rangle$ to $\A$ if we define
$$\Phi(a)=\sum_{i \in I}\Phi_i(e_ia).$$

Finally, let us prove that $\Phi : \A \to G$ is bounded. If $\H$ is finite dimensional, this immediately follows from (\ref{bound}).
Now assume that $\dim{\H}=\dim_{\A}W=\aleph_0.$ Then for every $a \in \A$ we have, by Remark \ref{rem3.2},
\begin{eqnarray}
&& \Vert \Phi(a^*a) \Vert = \Vert \Phi( \langle \sum_{i \in I} v_ia, \sum_{i \in I} v_ia \rangle) \Vert \nonumber \\
&=& \Vert S(\sum_{i \in I} v_ia, \sum_{i \in I} v_ia) \Vert \leq \Vert S \Vert  \Vert \sum_{i \in I} v_ia \Vert_{W}^2 \nonumber \\
&=& \Vert S \Vert \Vert \langle \sum_{i \in I} v_ia, \sum_{i \in I} v_ia \rangle \Vert = \Vert S \Vert \Vert a^*a \Vert. \nonumber
\end{eqnarray}
Thus $\Phi$ is bounded on positive elements on $\A.$ Therefore it is bounded on $\A.$
\end{proof}

We are now ready to prove our main result of this section.

\begin{thm}\label{3.1}
Let $W$ be a Hilbert $\A$-module such that $\dim_{\A}W\ge 2.$
Let $G$ be a uniquely 2--divisible abelian group, and $f : W \to G$ be an o.~a.~m.
Then the following holds:

(i) There exist a unique additive mapping $T : W \to G$ and a unique symmetric biadditive orthogonality preserving
mapping $B : W \times W \to G$ such that
$$f(x)=T(x)+B(x,x)\quad \text{for all}\quad x \in W.$$

(ii) If $G$ is a normed space and $f$ is continuous, then there are a unique continuous additive mapping $T : W \to G$
and a unique linear mapping $\Phi : \langle W, W \rangle \to G$ such that
$$f(x)=T(x)+\Phi(\langle x,x \rangle)\quad \text{for all}\quad x \in W.$$
Furthermore, if $\H$ is finite dimensional or $\dim{\H}=\dim_{\A}W=\aleph_0,$ then $\Phi$ can be extended to a continuous linear mapping
on $\A$.
\end{thm}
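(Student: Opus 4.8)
The plan is to reduce everything to Theorem~\ref{2.1}, whose conclusion is already exactly the representation we want, but only on a submodule of the form $W_0 = V \oplus \varphi(V)$. The entire difficulty is therefore to arrange $W_0 = W$, or else to bootstrap the global statement from small pieces. I would first fix an orthonormal basis $\{w_i : i \in I\}$ of $W$ with $\langle w_i, w_i\rangle = e$ a single rank-one projection (available by the remarks preceding Lemma~\ref{bases}), and record two facts that need no localization. First, by Remark~\ref{TB} the odd and even parts $T(x)=\frac12(f(x)-f(-x))$ and $F(x)=\frac12(f(x)+f(-x))$ are themselves o.~a.~m., and the formulas there define $T$ and $B$ on all of $W$; in particular uniqueness of $T$, $B$ (and later $\Phi$) is automatic. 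Second, $B$ is symmetric and orthogonality preserving on all of $W\times W$ straight from the definition: the formula for $B$ is manifestly symmetric, and if $\langle x,y\rangle=0$ then also $(-x)\perp(-y)$, $x\perp(-y)$ and $(-x)\perp y$, so substituting the four o.~a.~m.\ splittings into $B(x,y)=\frac18\big(f(x+y)+f(-x-y)-f(x-y)-f(-x+y)\big)$ makes it vanish.

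When $\dim_{\A}W$ is even or infinite I would exhibit an isometric halving of $W$. Partition $I = I_1 \sqcup I_2$ together with a bijection $\sigma\colon I_1\to I_2$, let $V$ be the closed submodule generated by $\{w_i : i\in I_1\}$, and let $\varphi\colon V\to W$ be the morphism determined by $\varphi(w_i a)=w_{\sigma(i)}a$. Then $\varphi(V)=V^{\perp}$ and $W_0 = V\oplus\varphi(V)=W$, so Theorem~\ref{2.1} yields (i) and (ii) on all of $W$ in one step; for (ii) one then applies Proposition~\ref{P:repS} to the resulting sesquilinear orthogonality preserving $S$ to replace $S(x,x)$ by $\Phi(\langle x,x\rangle)$, with the extension to $\A$ coming from the ``furthermore'' clause of that proposition.

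The genuine obstacle is the odd finite case $\dim_{\A}W=n$ (odd, $n\ge 3$), where no isometric halving of $W$ exists and, worse, no proper $W_0$ can contain elements that generate all of $W$; here a single application of Theorem~\ref{2.1} cannot suffice. For this case I would argue componentwise. Writing $P_i(x)=w_i\langle w_i,x\rangle$, the components are pairwise orthogonal and $\A$-linear in $x$, and $x=\sum_{i=1}^{n}P_i(x)$ is a \emph{finite} sum, so the o.~a.~m.\ property gives $f(x)=\sum_i f(P_i(x))$ and likewise for $T$ and $F$. Each pair $P_i(x),P_i(y)$ lies in the rank-one submodule $\overline{w_i\A}$, which sits inside the two-dimensional, hence evenly halvable, submodule $W_0^{(i)}=\overline{w_i\A}\oplus\overline{w_j\A}$ for any $j\ne i$ (such $j$ exists precisely because $\dim_{\A}W\ge 2$). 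On $W_0^{(i)}$ Theorem~\ref{2.1} applies, so additivity of $T$, the quadratic identity $F(x+y)+F(x-y)=2F(x)+2F(y)$, and the relation $B(ix,iy)=B(x,y)$ each hold there. Feeding the finite orthogonal decompositions through and using orthogonality preservation to annihilate all cross terms with $i\ne j$, these identities propagate to all of $W$. Standard polarization then upgrades ``$F$ quadratic on $W$'' to ``$B$ biadditive on $W\times W$ with $B(x,x)=F(x)$'', giving $f=T+B(\cdot,\cdot)$ on $W$ for part (i); for part (ii) one forms $S(x,y)=B(x,y)+iB(x,iy)$ exactly as in Lemma~\ref{1.3} and invokes Proposition~\ref{P:repS}.

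The step I expect to be delicate is precisely this last propagation in the odd case: verifying that each quadratic/biadditive identity truly localizes to the one-dimensional pieces $\overline{w_i\A}$ and that orthogonality preservation cleanly removes every off-diagonal term $B(P_i(\cdot),P_j(\cdot))$ with $i\ne j$. Everything else is either a direct consequence of the explicit formulas of Remark~\ref{TB} or a citation of Theorem~\ref{2.1} and Proposition~\ref{P:repS}; the only conceptual content is recognizing that the parity of $\dim_{\A}W$ is the true dividing line and that the orthogonal component decomposition is what rescues the odd finite-dimensional case.
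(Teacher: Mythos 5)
Your proposal is correct, and its skeleton is the paper's: an orthonormal basis with constant inner product $e$, a halving morphism $\varphi$ matching two halves of the basis so that $W_0=V\oplus\varphi(V)=W$, Theorem \ref{2.1} for the even and infinite cases, and Proposition \ref{P:repS} to convert $S(x,x)$ into $\Phi(\langle x,x\rangle)$ in part (ii) --- on those cases you and the paper coincide essentially verbatim. The divergence is the odd finite-dimensional case. The paper covers $W$ by two overlapping even-dimensional submodules --- a $2$-dimensional $Z$ containing a $1$-dimensional $X$, together with $X^{\perp}$ of dimension $2n$ --- and then simply asserts ``the statement is true both on $Z$ and $X^{\perp}$, hence on $W$,'' leaving the gluing along $W=X\oplus X^{\perp}$ implicit (it rests on the facts that $T$ and $F$ are themselves orthogonally additive and that $T$, $B$ are given by the universal formulas of Remark \ref{TB}). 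You instead decompose $x=\sum_{i=1}^{n}P_i(x)$ into its rank-one basis components, embed each component pair into a two-dimensional halvable submodule $\overline{w_i\A}\oplus\overline{w_j\A}$, and explicitly propagate additivity of $T$, the quadratic identity for $F$, and the relation $B(ix,iy)=B(x,y)$ through the finite orthogonal sums, with the off-diagonal terms $B(P_i(\cdot),P_j(\cdot))$, $i\neq j$, killed by orthogonality preservation; since the $T$ and $B$ produced by Theorem \ref{2.1} on each local $W_0$ are the globally defined mappings of Remark \ref{TB}, the local identities are identities for one and the same pair of mappings, and the propagation does go through --- the step you flagged as delicate is sound. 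What your finer decomposition buys is precisely the verification the paper compresses into one sentence; what the paper's coarser cover buys is brevity. Two further points in your favor: your direct proof that $B$ is orthogonality preserving on all of $W\times W$ from the explicit formula is cleaner than localizing it, and you correctly confine the componentwise argument to finite dimension --- in the infinite case $x=\sum_i P_i(x)$ is only a norm limit and orthogonal additivity gives finite additivity only, so that route would fail for part (i) (where no continuity is assumed), which is exactly why the halving argument must carry the infinite case, as it does in both your write-up and the paper's.
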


\begin{proof}
(i) \,
First assume that $W$ is either finite dimensional with $\dim_{\A}W = 2n,$ or $\dim_{\A}W \ge \aleph_0.$
If $\dim_{\A}W = 2n$ then let $V$ be a closed submodule of $W$ such that $\dim_{\A}W = n$; if $\dim_{\A}W \ge \aleph_0$ then let $V$ be a closed submodule of $W$ such that $\dim_{\A}V = \dim_{\A}V^{\perp} = \dim_\A W.$
Let $\{w_i \, : \, i \in I\}$ be an orthonormal basis for $W$ such that $\langle w_i, w_i \rangle =e$ for all $i \in I,$
where $e$ is a fixed rank one projection in $\A$.
Let $\{w_i \, : \, i \in I_1  \subseteq I\}$ be an orthonormal basis for $V$ and $\{w_i \, : \, i \in I_2 \subseteq I\}$ be an orthonormal basis for $V^{\perp}.$
Let $\varphi : V \to V^{\perp}$ be an isomorphism between the bases of $V$ and $V^{\perp}.$ It remains to apply Theorem \ref{2.1} (i).
Notice that $V \oplus \varphi(V) = V \oplus V^{\perp} =W.$

Now assume that $W$ is finite dimensional with $\dim_{\A}W = 2n+1.$
From the above we conclude that the desired conclusion is true for $f$ restricted to any $2n$-dimensional closed submodule of $W.$
Let $X$ be a closed submodule of $W$ such that $\dim_{\A}X= 1.$ Then $\dim_{\A}X^{\perp} = 2n.$
Let $Z$ be a closed submodule of $W$ such that $\dim_{\A}Z = 2,$ and $X \subset Z.$
The statement is true both on $Z$ and $X^{\perp},$ hence on $W$.

(ii) \,
Combining the proofs of (i) above and Theorem \ref{2.1} (ii), we can find a unique continuous additive mapping
$T : W \to G$ and a unique continuous sesquilinear orthogonality preserving mapping $S : W \times W \to G$ such that
$$f(x)=T(x)+S(x,x)\quad \text{for all}\quad x \in W.$$
Then apply Proposition \ref{P:repS} to end the proof.
\end{proof}

We should mention that the condition $\dim_{\A} W \geq 2$ is essential in Theorem \ref{3.1}, as shown
in the following example.

\begin{eg}
Let $\H$ be an infinite dimensional Hilbert space.
Then $\H$ is a Hilbert $\A$-module with respect to the $\A$-valued inner product given by $\langle \xi,\eta\rangle=\eta\otimes\xi.$ It is known that $\dim_{\A}\H=1$ (\cite[Example 1]{bg2} and \cite[Example 2.3]{bg1}). Notice that $\langle \xi, \eta\rangle=0$ if and only if $\xi=0$ or $\eta=0$. Then every odd mapping on $\H$ (taking values in a uniquely 2--divisible abelian group) is orthogonally additive, but not additive in general. For example, fix $0\neq \eta_0\in \H$ and define $f(\xi)=(\xi,\eta_0)\xi\otimes\xi$ for all $\xi\in \H$.
\end{eg}

The following example shows that in the case when $\H$ is infinite dimensional and $\dim_{\A}W$ is finite, the mapping $\Phi$ from Theorem \ref{3.1} cannot be extended to a continuous linear mapping on $\A.$

\begin{eg}
Let $\H$ be an infinite dimensional separable Hilbert space. Let $W = \H \oplus \H$ be a Hilbert $\A$-module with the coordinate operations and the $\A$-valued inner product given by
$$\langle (\xi_1, \xi_2), (\eta_1, \eta_2) \rangle = \eta_1 \otimes \xi_1 + \eta_2 \otimes \xi_2.$$
Then $\dim_{\A}W=2$ (see \cite[Theorem 3]{bg2} and \cite[Section 2]{cmr}).
To distinguish the above notation, we use $(\, . \, , \, . \,)_\H$ for the inner product on $\H$.
Define $f : W \to \bC$ by
$$f\big((\xi_1,\xi_2)\big) = (\xi_1, \xi_1)_\H + (\xi_2, \xi_2)_\H\quad \text{for all}\quad \xi_1, \xi_2 \in \H.$$

We claim that $f$ is an orthogonally additive mapping. Indeed,
let $\xi_1, \xi_2, \eta_1, \eta_2 \in \H$ be such that
$$0=\langle (\xi_1, \xi_2), (\eta_1, \eta_2) \rangle = \eta_1 \otimes \xi_1 + \eta_2 \otimes \xi_2.$$
Then one can easily check that
$$(\xi_1, \eta_1)_\H + (\eta_1, \xi_1)_\H + (\xi_2, \eta_2)_\H + (\eta_2, \xi_2)_\H=0.$$
Thus
\begin{eqnarray}
&& f\big((\xi_1, \xi_2)+(\eta_1, \eta_2)\big) = f\big((\xi_1 + \eta_1, \xi_2 + \eta_2)\big) \nonumber \\
&=& ( \xi_1+\eta_1, \xi_1+\eta_1 )_\H + ( \xi_2+\eta_2, \xi_2+\eta_2 )_\H \nonumber \\
&=& ( \xi_1, \xi_1 )_\H + ( \xi_1, \eta_1 )_\H + ( \eta_1, \xi_1 )_\H + ( \eta_1, \eta_1 )_\H \nonumber \\
&& + ( \xi_2, \xi_2 )_\H + ( \xi_2, \eta_2 )_\H + ( \eta_2, \xi_2 )_\H + ( \eta_2, \eta_2 )_\H \nonumber \\
&=& ( \xi_1, \xi_1 )_\H + ( \xi_2, \xi_2 )_\H + ( \eta_1, \eta_1 )_\H + ( \eta_2, \eta_2 )_\H \nonumber \\
&=& f\big((\xi_1,\xi_2)\big) + f\big((\eta_1,\eta_2)\big). \nonumber
\end{eqnarray}
This shows that $f$ is an orthogonally additive mapping. It is clear that $f$ is even.

In what follows, we prove that there is no continuous linear mapping $\Phi:\A\to\mathbb{C}$
such that $f(w)=\Phi(\langle w,w \rangle)$ for all $w \in W.$
To the contrary, assume that there is such a mapping $\Phi$.
Let $\{ \xi_n \, : \, n \in \mathbb{N}\}$ be an orthonormal basis for $\H$ and let $E_n = \xi_n \otimes \xi_n$ for all $n \in \mathbb{N}.$
For each $n \in \mathbb{N}$, we set $T_n = \sum_{k=1}^n \frac{1}{k} E_k \in \langle W, W \rangle.$
Since the sequence $(T_n)$ converges to $T = \sum_{k=1}^{\infty} \frac{1}{k} E_k \in \A$ and $\Phi$ is continuous, we conclude that the sequence $(\Phi(T_n))$ converges as well. However,
\begin{align*}
\Phi(T_n)&= \sum_{k=1}^n \frac{1}{k} \Phi(E_k) = \frac{1}{2} \sum_{k=1}^n \frac{1}{k} \Phi(2E_k) \nonumber \\
&= \frac{1}{2} \sum_{k=1}^n \frac{1}{k} \Phi(\xi_k \otimes \xi_k + \xi_k \otimes \xi_k) \\
&= \frac{1}{2} \sum_{k=1}^n \frac{1}{k} \Phi(\langle (\xi_k, \xi_k), (\xi_k, \xi_k) \rangle) \nonumber \\
&= \frac{1}{2} \sum_{k=1}^n \frac{1}{k} f\big((\xi_k, \xi_k)\big) \\
&= \sum_{k=1}^n \frac{1}{k} \nonumber
\end{align*}
does not converge; a contradiction.
\end{eg}

The following result is an immediate consequence of Theorem \ref{3.1} (see \cite[Example 2]{bg2}).

\begin{cor}\label{3.4}
Let $\H$ be a Hilbert space with $2 \leq \dim \H \leq \aleph_0,$
and the orthogonality on $\A$ be defined by
$$x \perp y \Longleftrightarrow x^*y=0.$$
Assume that $G$ is a uniquely 2--divisible abelian group, and that $f : \A \to G$ is an o.~a.~m. Then the following holds:
\begin{itemize}
\item[(i)]
There exist a unique additive mapping $T : \A \to G$ and a unique symmetric biadditive orthogonality preserving mapping $B : \A \times \A \to G$ such that
$$f(x)=T(x)+B(x,x)\quad\text{for all}\quad x \in \A.$$
\item[(ii)]
If $G$ is a normed space and $f$ is continuous, then $T$ is continuous and there exists a unique continuous linear mapping
$\Phi : \A \to G$ such that
$$f(x)=T(x)+\Phi(x^*x)\quad\text{for all}\quad x \in \A.$$
\end{itemize}
\end{cor}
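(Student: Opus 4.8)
The plan is to recognize this corollary as the special case of Theorem \ref{3.1} in which the Hilbert module $W$ is the algebra $\A$ itself. Recall from Section \ref{intro} that $\A$ is a full Hilbert $\A$-module under the inner product $\langle x,y\rangle=x^*y$, and that with this structure the relation $x\perp y\Longleftrightarrow x^*y=0$ is precisely orthogonality in the module sense, $\langle x,y\rangle=0$. Hence an o.~a.~m.\ $f:\A\to G$ in the sense of the corollary is exactly an o.~a.~m.\ on the Hilbert $\A$-module $W=\A$, and the entire statement should fall out of Theorem \ref{3.1} once this identification is in place.

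First I would record the orthogonal dimension of $\A$ as a module over itself. By \cite[Example 2]{bg2} one has $\dim_{\A}\A=\dim\H$, so the hypothesis $2\le\dim\H\le\aleph_0$ yields $\dim_{\A}\A\ge 2$, which is exactly the dimension requirement needed to invoke Theorem \ref{3.1}. Part (i) of the corollary is then immediate from Theorem \ref{3.1}(i): applied to $W=\A$ it produces the unique additive $T$ and the unique symmetric biadditive orthogonality preserving $B$ with $f(x)=T(x)+B(x,x)$ for all $x\in\A$.

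For part (ii) I would apply Theorem \ref{3.1}(ii) with $W=\A$ and $G$ normed: since $f$ is continuous, one obtains a unique continuous additive $T:\A\to G$ together with a unique linear $\Phi:\langle\A,\A\rangle\to G$ such that $f(x)=T(x)+\Phi(\langle x,x\rangle)=T(x)+\Phi(x^*x)$. It then remains to upgrade $\Phi$ to a continuous linear map on all of $\A$, and here the dichotomy built into the hypothesis does the work: because $\dim_{\A}\A=\dim\H$, the two possibilities $2\le\dim\H<\aleph_0$ and $\dim\H=\aleph_0$ coincide exactly with the two branches ``$\H$ finite dimensional'' and ``$\dim\H=\dim_{\A}W=\aleph_0$'' appearing in the final assertion of Theorem \ref{3.1}(ii). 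In either branch $\Phi$ extends to a continuous linear mapping on $\A$, giving the stated representation $f(x)=T(x)+\Phi(x^*x)$.

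The argument is essentially bookkeeping once $W=\A$ is fixed; the only point that demands genuine care is the dimension count, namely confirming via \cite[Example 2]{bg2} that $\dim_{\A}\A=\dim\H$, so that simultaneously the $\ge 2$ requirement is met and the two cases in the hypothesis exhaust precisely the two branches of the ``furthermore'' clause of Theorem \ref{3.1}(ii). Uniqueness of $T$, $B$, and $\Phi$ is inherited directly from the corresponding uniqueness statements in Theorem \ref{3.1} (cf.\ Remark \ref{TB} and Proposition \ref{P:repS}), so no separate verification is needed.
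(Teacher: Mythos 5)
Your proposal is correct and follows exactly the route the paper takes: the paper dispatches this corollary with the single remark that it is ``an immediate consequence of Theorem \ref{3.1} (see \cite[Example 2]{bg2})'', i.e.\ precisely your specialization $W=\A$ with $\langle x,y\rangle=x^*y$ and $\dim_{\A}\A=\dim\H$. Your write-up merely makes explicit the bookkeeping (the $\dim_{\A}\A\ge 2$ check and the matching of the two dimension cases with the two branches of the ``furthermore'' clause of Theorem \ref{3.1}(ii)) that the paper leaves implicit.
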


The following example demonstrates that the underlying algebra is $\K(\H)$ (instead of just being a $C^*$-algebra of compact operators) is essential in  Corollary \ref{3.4} (and Theorem \ref{3.1}).
The same example shows that we cannot take an arbitrary $H^*$-algebra instead of $\H\S(\H).$

\begin{eg}\label{comm}
Let $\H$ be a separable Hilbert space. Fix an orthonormal basis $\{\xi_i\}$ for $\H$.
As usual, we represent operators on $\H$ as matrices with respect to $\{\xi_i\}$.
Let $\D$ be the norm closed subalgebra of $\A$ consisting of diagonal operators.
Then $\D$ is a Hilbert module over itself; let us notice that $\D$ is commutative.
Then $\langle x,y\rangle=0$ if and only if $x^*y=y^*x=yx^*=xy^*=0$.
Define
$f : \D \to \D$ by
$f(x)=x(x^*)^2.$
Then $f$ is an odd orthogonally additive mapping, but it is clearly not additive.
\end{eg}

\section{O.~A.~M.~on Hilbert $C^*$-modules over a $C^*$-algebra of compact operators and Hilbert $H^*$-modules}\label{C*&H*}

Let $\A$ be an arbitrary  $C^*$-algebra of compact operators.
By \cite[Theorem 1.4.5]{a}
$$\A=\oplus_{j \in J} \K(\H_j) = \{ (a_j) \in \Pi_{j \in J} \K(\H_j) \, : \, \lim_{j \in J} \Vert a_j \Vert = 0\}.$$
Let $W$ be a Hilbert $\A$-module. We may assume that $W$ is full.
If $W_j$ denotes the closed linear span of $W\K(\H_j),$ then each $W_j$ is a (full) Hilbert $\K(\H_j)$-module and
$W$ is the outer direct sum of $W_j$'s:
$$W=\oplus_{j \in J} W_j = \{ (w_j) \in \Pi_{j \in J} W_j \, : \, \lim_{j \in J} \Vert w_j \Vert = 0\}$$
(see \cite[Introduction]{bg2} or \cite{Sch}).

Now let $\A$ be an arbitrary $H^*$-algebra.
By \cite[Theorems 4.2 and 4.3]{ambrose} $\A$ is the orthogonal sum $\oplus_{j \in J} \A_j$ where each $\A_j$ is a simple $H^*$-algebra which is a minimal closed ideal of $\A$
and $\A_j=\H\S(\H_j)$ for some Hilbert space $\H_j.$ Then every $a \in \A$ can be written as $a=\sum_{j \in J}a_j$ with $a_j \in \H\S(\H_j)$ and $\Vert a \Vert^2=\sum_{j \in J} \Vert a_j \Vert^2.$
Let $W$ be a Hilbert $\A$-module. We may assume that $W$ is faithful (i.e.~it has zero annihilator in $\A$).
According to \cite[Theorem 2.3]{cmr} there exists a family $\{W_j \, : \, j \in J\}$ such that each $W_j$ is a (faithful) Hilbert $\H\S(\H_j)$-module and $W$ is the mixed product of $W_j$'s:
$$W=\times_{j \in J} W_j = \{ (w_j) \in \Pi_{j \in J} W_j \, : \, \sum_{j \in J} \Vert w_j \Vert^2 < \infty\}.$$

\begin{thm}\label{4.1}
Let $\A=\oplus_{j \in J} \A_j$ be a $C^*$-algebra of compact operators, resp.~an $H^*$-algebra, with $\A_j = \K(\H_j),$ resp.~$\A_j = \H\S(\H_j)$.
Let $W=\oplus_{j \in J} W_j$ be a Hilbert $\A$-module with $W_j$ a Hilbert $\A_j$-module such that $\dim_{\A_j} W_j = \dim \H_j=\aleph_0$ for each $j \in J.$
Let $G$ be a normed space and let $f : W \to G$ be a continuous o.~a.~m.
Then there exist a continuous additive mapping $T : W \to G$ and a continuous linear mapping $\Phi : \A \to G$ such that
$$f(x)=T(x)+\Phi(\langle x,x \rangle)\quad \text{for all}\quad x \in W.$$
\end{thm}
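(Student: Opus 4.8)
The plan is to reduce the statement to the single-summand situation already handled in Theorem \ref{3.1}, and then to reassemble the local data over $j \in J$ into global mappings $T$ and $\Phi$. First I would observe that for each fixed $j$ the restriction $f|_{W_j}$ is a continuous o.~a.~m.\ on the Hilbert $\A_j$-module $W_j$: indeed, if $x,y \in W_j$ and $\langle x,y\rangle = 0$ in $\A_j$, then the same equality holds in $\A$, so orthogonal additivity of $f$ applies. Since $\dim_{\A_j} W_j = \dim \H_j = \aleph_0 \ge 2$, Theorem \ref{3.1}(ii) together with its ``furthermore'' clause supplies a continuous additive $T_j : W_j \to G$ and a continuous linear $\Phi_j : \A_j \to G$ with $f(x) = T_j(x) + \Phi_j(\langle x,x\rangle)$ for all $x \in W_j$. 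It is convenient to record that $T_j$ and $\Phi_j(\langle\cdot,\cdot\rangle)$ are precisely the odd and even parts of $f|_{W_j}$; accordingly I set $T = \tfrac12\big(f - f\circ(-\id)\big)$ and $F = \tfrac12\big(f + f\circ(-\id)\big)$ on all of $W$, which are continuous, are respectively an odd and an even o.~a.~m., and restrict on each $W_j$ to $T_j$ and to $\Phi_j(\langle\cdot,\cdot\rangle)$.

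Next I would exploit the orthogonal decomposition $x = \sum_{j} x_j$ of an arbitrary $x = (x_j) \in W$, noting that $x_j \perp x_k$ for $j \ne k$ because distinct summands pair to $0$. Since $T$ and $F$ are continuous and orthogonally additive, finite orthogonal additivity together with continuity gives the (net) identities $T(x) = \sum_j T(x_j)$ and $F(x) = \sum_j F(x_j)$. From here, additivity of $T$ on $W$ follows: it holds on the dense subgroup of finitely supported elements by combining orthogonal additivity with the additivity of each $T_j$, and then extends to all of $W$ by continuity. A short computation with the same decomposition shows that $F$ is in fact \emph{quadratic on all of $W$}: writing $F(x\pm y) = \sum_j F(x_j \pm y_j)$ and using that each $F|_{W_j}$ is quadratic yields $F(x+y) + F(x-y) = 2F(x) + 2F(y)$. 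Being continuous and quadratic, $F$ then satisfies a growth bound $\|F(w)\| \le C \|w\|_W^2$ with $C = \sup_{\|w\|_W \le 1}\|F(w)\| < \infty$, obtained from continuity at $0$ and the homogeneity $F(tw) = t^2 F(w)$.

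It remains to produce a single continuous linear $\Phi : \A \to G$ with $\Phi(\langle x,x\rangle) = F(x)$. The idea is to realize positive elements of $\A$ through $W$. Given $a \ge 0$ with $a = (a_j)$ and $a_j = b_j^* b_j \in \A_j$, Remark \ref{rem3.2} (valid because $\dim_{\A_j} W_j = \dim \H_j = \aleph_0$) produces $u_j \in W_j$ with $\langle u_j, u_j\rangle = a_j$ and $\|u_j\|_{W_j}^2 = \|a_j\|$. When $u := (u_j)$ lies in $W$ one has $\langle u,u\rangle = a$ and $F(u) = \sum_j F(u_j) = \sum_j \Phi_j(a_j)$, so the value $F(u)$ depends only on $a$; I define $\Phi(a) := F(u)$ and extend to all of $\A$ by writing each element as a combination of four positive ones. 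The bound $\|\Phi(a)\| = \|F(u)\| \le C\|u\|_W^2 = C\|a\|$ then gives continuity. In the $C^*$ case $\A = \oplus_j \K(\H_j)$ this realization is available for \emph{every} positive $a$, since $\|u_j\|^2 = \|a_j\| \to 0$ forces $u \in \oplus_j W_j = W$; hence $\Phi$ is defined and bounded directly on all of $\A$. In the $H^*$ case the realization is only guaranteed on the dense ideal $\langle W, W\rangle$ (where $\sum_j \|a_j\| < \infty$), on which the same estimate shows $\Phi$ is bounded, and one then extends $\Phi$ to $\A$ by density.

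The main obstacle is precisely this last assembly step: turning the family $\{\Phi_j\}$ into one continuous $\Phi$ on $\A$. The difficulty is twofold. First, the naive series $\sum_j \Phi_j(a_j)$ need not converge absolutely, so one cannot define $\Phi$ term-by-term; the remedy is to route the definition through the globally quadratic $F$ and its uniform bound $C$, which is what makes the partial sums Cauchy and the limit independent of choices. Second, the two module structures behave differently — the $c_0$-type outer direct sum in the $C^*$ case lets one realize all positive elements of $\A$ inside $W$, whereas the $\ell^2$-type mixed product in the $H^*$ case only realizes the dense subspace $\langle W, W\rangle$, forcing a density/continuity argument for the final extension to $\A$. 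Verifying the compatibility of the componentwise norms with the global norm in each structure is the technical heart of the argument; once $\Phi$ is in hand, the identity $f(x) = T(x) + F(x) = T(x) + \Phi(\langle x,x\rangle)$ holds on the dense set of finitely supported vectors and extends to all of $W$ by continuity.
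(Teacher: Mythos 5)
Your proposal is correct, and its skeleton is the same as the paper's: restrict $f$ to each $W_j$, invoke Theorem \ref{3.1}(ii) with its extension clause, split into odd and even parts, and reassemble $T$ and $\Phi$ using the mutual orthogonality of the summands together with Remark \ref{rem3.2}. The one genuine divergence is in how $\Phi$ is assembled. You realize a positive element $a=(a_j)$ itself, choosing $u_j$ with $\langle u_j,u_j\rangle=a_j$, so that $\Vert u_j\Vert_{W_j}^2=\Vert a_j\Vert$; in the $H^*$ case the mixed-product condition $\sum_j\Vert u_j\Vert^2<\infty$ then forces the trace-class-type restriction $\sum_j\Vert a_j\Vert<\infty$, and you (correctly) patch this with a density-and-continuity extension. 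The paper sidesteps that step entirely: using Lemma \ref{bases} it fixes orthonormal bases of the $W_j$ with $\langle w_i^j,w_i^j\rangle=\xi_i^j\otimes\xi_i^j$ and realizes $a^*a$, rather than $a$, as $\langle \sum_i w_ia,\sum_i w_ia\rangle$ with components $u_j=\sum_{i\in I_j}w_i^ja_j$; since $\Vert u_j\Vert^2=\Vert a_j^*a_j\Vert\le\Vert a_j\Vert^2$ and $\sum_j\Vert a_j\Vert^2\le\Vert a\Vert^2<\infty$, the element $\sum_i w_ia$ lies in $W$ for \emph{every} $a\in\A$ in both the $c_0$-type and the $\ell^2$-type direct sums, so $\Phi$ is obtained on all positives of $\A$ at once, with no density argument, and is then extended by the four-positives decomposition exactly as you do. Your detour is harmless, just slightly longer; on the other hand your continuity argument is actually more careful than the paper's: you prove the global even part $F$ is quadratic on $W$, deduce $F(tw)=t^2F(w)$, and get $\Vert F(w)\Vert\le C\Vert w\Vert_W^2$ with $C=\sup_{\Vert w\Vert_W\le 1}\Vert F(w)\Vert$ finite by continuity at $0$, whereas the paper's estimate $\Vert\Phi(a^*a)\Vert\le\Vert f\Vert\cdot\Vert a\Vert$ invokes an undefined ``norm'' of the nonlinear $f$ and is, in substance, your homogeneity argument left implicit. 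In short: same theorem by the same strategy, with the paper's $a^*a$-realization buying a uniform treatment of the $C^*$ and $H^*$ cases, and your version buying an explicit quantitative bound for $\Phi$.
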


\begin{proof}
Define $f_j = f \vert_{W_j}$ for each $j \in J.$
Then $f_j : W_j \to G$ is a continuous o.~a.~m.
By Theorem \ref{3.1}, there exist a continuous additive mapping $T_j : W_j \to G$ and a continuous linear mapping $\Phi_j : \A_j \to G$ such that
$$
f_j(x_j)=T_j(x_j)+\Phi_j(\langle x_j, x_j \rangle)\quad \text{for all}\quad x_j \in W_j.
$$

Define $T : W \to G$ by
\[
T(x)=\frac{1}{2}\big(f(x)-f(-x)\big).
\]
 If we write $x=\sum_{j \in J} x_j$ with $x_j \in W_j,$ then
$$T(x)=\frac{1}{2}\sum_{j \in J}\big(f_j(x_j)-f_j(-x_j)\big)=\sum_{j \in J} T_j(x_j).$$
This implies that $T$ is an additive mapping; it is continuous since $f$ is continuous.

Let $\{w_i \, : \, i \in I\}$ be an orthonormal basis for $W$ and let $\{{w_i}^j \, : \, i \in I_j\} \subseteq \{w_i \, : \, i \in I\}$ be an orthonormal basis for $W_j.$
By Lemma \ref{bases}, without loss of generality we can assume $\langle {w_i}^j, {w_i}^j \rangle = {\xi_i}^j \otimes {\xi_i}^j$ where $\{ {\xi_i}^j \, : \, i \in I_j\}$ is an orthonormal basis for $\H_j.$
Let $a_j \in \A_j.$ Then
\begin{align*}
\Phi_j(a_j^*a_j)
&= \Phi_j(\sum_{i \in I_j} a_j^* ({\xi_i}^j \otimes {\xi_i}^j) a_j) \nonumber
   = \sum_{i \in I_j} \Phi_j (a_j^* \langle {w_i}^j, {w_i}^j \rangle a_j) \\
&= \sum_{i \in I_j} \Phi_j (\langle {w_i}^j a_j, {w_i}^j a_j\rangle) \nonumber
   = \Phi_j (\langle \sum_{i \in I_j} {w_i}^j a_j, \sum_{i \in I_j} {w_i}^j a_j\rangle). \nonumber
\end{align*}
Notice that $\sum_{i \in I} w_i a$ converges for all $a \in \A.$
In fact, if $a= \sum_{j \in J} a_j$ with $a_j \in \A_j,$ then Remark \ref{rem3.2} implies that $\sum_{i \in I_j} w_i^j a_j \in W_j$ and
$$\langle \sum_{i \in I_j} w_i^j a_j, \sum_{i \in I_j} w_i^j a_j \rangle = a_j^*a_j.$$
Then
\begin{align*}
\Vert \sum_{i \in I_j} w_i^j a_j \Vert_{W_j}
&= \Vert \langle \sum_{i \in I_j} w_i^j a_j, \sum_{i \in I_j} w_i^j a_j \rangle \Vert^{\frac{1}{2}} \nonumber \\
&= \Vert a_j^*a_j \Vert^{\frac{1}{2}} = \Vert a_j \Vert. \nonumber
\end{align*}
Hence $\sum_{j \in J} \sum_{i \in I_j} {w_i}^j a_j \in W,$ that is, $\sum_{i \in I} w_i a \in W.$
For $a \in \A$ define $\Phi(a^*a)=\frac{1}{2}\big(f(\sum_{i \in I} w_ia)+f(-\sum_{i \in I} w_ia)\big).$
If we write $a=\sum_{j \in J} a_j$ with $a_j \in \A_j,$ then
\begin{eqnarray}
\Phi(a^*a)
&=&\frac{1}{2} \sum_{j \in J} \big(f_j(\sum_{i \in I_j} {w_i}^ja_j)+f_j(-\sum_{i \in I_j}{w_i}^ja_j)\big) \nonumber \\
&=& \sum_{j \in J} \Phi_j (\langle \sum_{i \in I_j} {w_i}^ja_j, \sum_{i \in I_j} {w_i}^ja_j \rangle)\nonumber\\
&=& \sum_{j \in J} \Phi_j(a_j^*a_j). \nonumber
\end{eqnarray}
Using the fact that every $a \in A$ can be written as a linear combination of four positive elements (i.e., those of the form $x^*x$ for some $x \in \A$),
we define $\Phi(a)=\sum_{j \in J} \Phi_j(a_j)$ for all $a=\sum_{j \in J} a_j \in A.$
Since each $\Phi_j$ is linear, $\Phi$ is linear as well.
Since
$$\Vert \Phi(a^*a)\Vert \leq \Vert f \Vert \cdot \Vert \sum_{i \in I} w_ia \Vert = \Vert f \Vert \cdot \Vert a \Vert$$
for every $a \in \A,$ $\Phi$ is continuous.
Finally,
$$f(x)=T(x)+\Phi(\langle x, x \rangle)\quad \text{for all}\quad x \in W.$$
\end{proof}

\begin{cor}\label{compact}
Let $\A= \oplus_{j \in J}\A_j$ be a $C^*$-algebra of compact operators, resp.~an $H^*$-algebra, with $\A_j = \K(\H_j),$ resp.~$\A_j = \H\S(\H_j),$ such that $2 \leq \dim{\H_j} \leq \aleph_0$ for each $j \in J.$
Let $G$ be a normed space and let $f : \A \to G$ be a continuous o.~a.~m., with respect to the orthogonality defined by
$$x \perp y \Longleftrightarrow x^*y=0.$$
Then there exist a unique continuous additive mapping $T : \A \to G$ and a unique continuous linear mapping $\Phi : \A \to G$ such that
$$f(x)=T(x)+\Phi(x^*x)\quad \text{for all}\quad x \in \A.$$
\end{cor}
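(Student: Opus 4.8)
The plan is to reduce to the one-block case treated in Corollary \ref{3.4} and then reassemble the pieces exactly as in the proof of Theorem \ref{4.1}; the only new point, relative to Theorem \ref{4.1}, is that finite-dimensional blocks are now allowed. Regard $\A$ as a full Hilbert module over itself via $\langle x,y\rangle=x^*y$, so that the prescribed orthogonality $x^*y=0$ is exactly $\langle x,y\rangle=0$. Under $\A=\oplus_{j\in J}\A_j$ the module splits as $W=\oplus_{j\in J}W_j$ with $W_j=\A_j$ viewed as a Hilbert $\A_j$-module over itself, and $\dim_{\A_j}\A_j=\dim\H_j$ (cf.\ \cite[Example 2]{bg2}); thus $2\le\dim_{\A_j}\A_j\le\aleph_0$ for every $j$.

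First I would restrict: put $f_j=f\vert_{\A_j}$. Each $f_j:\A_j\to G$ is a continuous o.a.m.\ for the orthogonality $x_j^*y_j=0$, so Corollary \ref{3.4} applies verbatim and produces a continuous additive $T_j:\A_j\to G$ together with a unique continuous linear $\Phi_j:\A_j\to G$ with $f_j(x_j)=T_j(x_j)+\Phi_j(x_j^*x_j)$. Invoking Corollary \ref{3.4} here, rather than Theorem \ref{4.1}, is precisely what lets me include the finite-dimensional blocks: because $W_j=\A_j$ is full over itself, the values $\langle x_j,x_j\rangle=x_j^*x_j$ already fill out the positive cone of $\A_j$, so no extension of $\Phi_j$ is needed and $\Phi_j$ is continuous on all of $\A_j$ regardless of whether $\dim\H_j$ is finite or $\aleph_0$.

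Next I would assemble the global data following Theorem \ref{4.1}. Setting $T(x)=\tfrac12\big(f(x)-f(-x)\big)$ and writing $x=\sum_j x_j$ gives $T(x)=\sum_j T_j(x_j)$, so $T$ is additive and continuous. For $\Phi$, use Lemma \ref{bases} to fix an orthonormal basis $\{w_i:i\in I\}$ of $W$ whose restriction to the $j$-th block satisfies $\langle w_i^j,w_i^j\rangle=\xi_i^j\otimes\xi_i^j$; then Remark \ref{rem3.2} yields $\langle\sum_{i\in I_j}w_i^j a_j,\sum_{i\in I_j}w_i^j a_j\rangle=a_j^*a_j$ and hence $\Vert\sum_{i\in I_j}w_i^j a_j\Vert_{W_j}=\Vert a_j\Vert$. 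Since these block norms tend to $0$ in the $C^*$ case and are square-summable in the $H^*$ case, the sum $\sum_{i\in I}w_i a$ converges in $W$, so $\Phi(a^*a):=\tfrac12\big(f(\sum_i w_i a)+f(-\sum_i w_i a)\big)=\sum_j\Phi_j(a_j^*a_j)$ is well defined; extending through the decomposition of an arbitrary element of $\A$ into four positive elements gives a linear $\Phi:\A\to G$ satisfying $f(x)=T(x)+\Phi(\langle x,x\rangle)$. Continuity of $\Phi$ then follows from the block estimate of Proposition \ref{P:repS}, which bounds $\Vert\Phi(a^*a)\Vert$ by a multiple of $\Vert a^*a\Vert$. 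Uniqueness is forced: $T$ must be the odd part $\tfrac12\big(f(x)-f(-x)\big)$, after which $\Phi$ is determined on the positive cone, and the positive cone spans $\A$.

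The step demanding the most care is the convergence of $\sum_{i\in I}w_i a$ in $W=\A$ and the attendant verification that $\Phi$ is well defined and bounded, i.e.\ the bookkeeping threading the block decompositions of $\A$ and $W$ through Lemma \ref{bases} and Remark \ref{rem3.2}. The finite-dimensional blocks excluded from Theorem \ref{4.1} are harmless here: the corresponding block sums are finite and $W_j=\A_j$ is full, so the genuine content is identical to the convergence argument already carried out for Theorem \ref{4.1}.
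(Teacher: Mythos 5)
Your overall route is exactly the one the paper intends: the paper states this corollary with no separate proof, the implicit argument being precisely yours --- regard $\A$ as a full Hilbert module over itself via $\langle x,y\rangle=x^*y$, split along $\A=\oplus_{j\in J}\A_j$ with $W_j=\A_j$ and $\dim_{\A_j}\A_j=\dim\H_j$, settle each block by Corollary \ref{3.4} (which, unlike Theorem \ref{4.1}, covers $2\le\dim\H_j<\aleph_0$ exactly because $W_j=\A_j$ is full over itself), and then reassemble $T$ and $\Phi$ verbatim as in the proof of Theorem \ref{4.1}. Your identification of the finite-dimensional blocks as the only new point relative to Theorem \ref{4.1}, and your observation that their block sums are finite so the convergence bookkeeping of Lemma \ref{bases} and Remark \ref{rem3.2} goes through unchanged, are both correct.

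One step, as written, would fail: deducing continuity of the global $\Phi$ from ``the block estimate of Proposition \ref{P:repS}.'' In this construction there is no global sesquilinear $S$ to which Proposition \ref{P:repS} could be applied; blockwise it gives only $\Vert\Phi_j(a_j^*a_j)\Vert\le\Vert S_j\Vert\,\Vert a_j^*a_j\Vert$, and summing these over $j$ does not bound $\Vert\Phi(a^*a)\Vert$ by a multiple of $\Vert a^*a\Vert$: nothing makes $\sup_{j}\Vert S_j\Vert$ finite, and even if it were, in the $C^*$-direct sum $\Vert a^*a\Vert=\sup_j\Vert a_j^*a_j\Vert$ while the summed bounds involve $\sum_j\Vert a_j\Vert^2$, which can diverge (take $J=\bN$ and $\Vert a_j\Vert=j^{-1/2}$). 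The repair is already contained in your own construction and is what the proof of Theorem \ref{4.1} actually does: $\Phi(a^*a)$ is by definition the even part of $f$ evaluated at $\sum_{i\in I}w_ia$, and $\Vert\sum_{i\in I}w_ia\Vert_W=\Vert a\Vert$, so continuity of $f$ at $0$ together with linearity of $\Phi$ (rescale a positive element so that the corresponding $a$ has small norm) bounds $\Phi$ on positive elements, hence on all of $\A$. A smaller point on uniqueness: in the $H^*$ case the elements $x^*x$ are the positive elements of $\tau(\A)$ and span only $\tau(\A)$, which is merely dense in $\A$; so ``the positive cone spans $\A$'' should read ``spans a dense subspace,'' and you need the continuity of $\Phi$ (which you have) to pin it down on all of $\A$.
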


Let us mention that Example \ref{comm} also provides a counterexample for Corollary \ref{compact} in the case when $\A = W = \oplus_{j \in J} \A_j$ with $\A_j = W_j = \K(\H_j)$ or $\H\S(\H_j)$ and $\dim_{\A_j} W_j = \dim \H_j = 1$ for all $j \in J.$

\section{O.~A.~M.~on $\B(\H_1, \H_2)$}\label{B(H)}

The aim of this section is to prove an analogue of Corollary \ref{3.4} for $\B(\H_1, \H_2)$ instead of $\K(\H)$ and $\H\S(\H).$

\begin{prop}\label{5.1}
Let $\H_1$ and $\H_2$ be Hilbert spaces with $\dim{\H_1}, \dim{\H_2}\ge 2.$
Let $G$ be a uniquely 2--divisible abelian group and let $f : \B(\H_1, \H_2) \to G$ be an o.~a.~m.,
with respect to the orthogonality defined by
$$x \perp y \Longleftrightarrow x^*y=0.$$
Then there exist a unique additive mapping $T : \B(\H_1, \H_2) \to G$ and a symmetric biadditive orthogonality preserving mapping $B : \B(\H_1, \H_2) \times \B(\H_1, \H_2) \to G$ such that
$$f(x)=T(x)+B(x,x)\quad\text{for all}\quad x \in \B(\H_1, \H_2).$$

If $\H$ is a Hilbert space such that $\dim{\H} \ge 2$, $G$ is a Banach space and $f$ is continuous, then $T$ is continuous and there exists a unique continuous linear mapping
$\Phi : \B(\H) \to G$ such that
$$f(x)=T(x)+\Phi(x^*x)\quad\text{for all}\quad x \in \B(\H).$$
\end{prop}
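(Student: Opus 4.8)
The plan is to view $W:=\B(\H_1,\H_2)$ as a Hilbert $\B(\H_1)$-module under $\langle x,y\rangle=x^*y$, so that the given relation $x\perp y\iff x^*y=0$ is exactly module-orthogonality; this $\perp$ is symmetric and satisfies $x\perp y\Rightarrow(-x)\perp(-y)$ as well as $x\perp y\Rightarrow x\perp iy$, so Theorem \ref{2.1} (via Lemmas \ref{1.1} and \ref{1.3}) applies once I exhibit the morphism. To build it I split the target: choose a closed subspace $M\subseteq\H_2$ and an isometry $U\colon M\to M^\perp$, set $V=\{x\in W:\operatorname{ran}x\subseteq M\}$ (a submodule with $V^\perp=\{x:\operatorname{ran}x\subseteq M^\perp\}$), and let $\varphi(x)=\widetilde U x$ with $\widetilde U$ the partial isometry extending $U$ by $0$ on $M^\perp$. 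Then $\langle\varphi(x),\varphi(y)\rangle=x^*\widetilde U^*\widetilde U y=x^*y$ on $V$, so $\varphi$ is a morphism with $\varphi(V)\subseteq V^\perp$ and $W_0=V\oplus\varphi(V)=\{x:\operatorname{ran}x\subseteq M\oplus U(M)\}$. Since the splitting only constrains $\H_2$, when $\dim\H_2$ is even or infinite I take $U$ onto $M^\perp$, so $W_0=W$, and Theorem \ref{2.1}(i) (resp.\ (ii)) yields $T$ and $B$ (resp.\ $S$) on all of $W$; uniqueness of $T$, and of $B$ by polarization, is immediate from the universal formulas of Remark \ref{TB}.

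The remaining case of the first assertion is $\dim\H_2=2n+1$ finite, which I handle by a covering argument modeled on the odd case of Theorem \ref{3.1}. Two facts come for free from Remark \ref{TB} and orthogonal additivity: $B$ is orthogonality preserving on all of $W\times W$ (if $x^*y=0$ then $y^*x=0$ and every sign-combination is orthogonal, so the formula for $B(x,y)$ collapses to $0$), and $T$ is additive on every orthogonal pair. Now fix a line $L\subseteq\H_2$, set $X=\{x:\operatorname{ran}x\subseteq L\}$ and $X^\perp=\{x:\operatorname{ran}x\subseteq L^\perp\}$ with $\dim L^\perp=2n$ even, and pick a $2$-dimensional $L\subseteq L_2\subseteq\H_2$ with $Z=\{x:\operatorname{ran}x\subseteq L_2\}$. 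By the even case the representation already holds on $X^\perp$ and on $Z\supseteq X$, so $T$ is additive and $B$ biadditive on $X$ and on $X^\perp$ separately. Writing an arbitrary $x=x_L+x_{L^\perp}$ along $\H_2=L\oplus L^\perp$ (an orthogonal pair), the free orthogonal additivity of $T$ and orthogonality-preservation of $B$ then let me assemble the global additivity of $T$, the global biadditivity of $B$, and $f=T+B(\cdot,\cdot)$ on all of $W$.

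For the continuous statement over $\B(\H)$ (so $\H_1=\H_2=\H$, $\dim\H\ge2$), Lemma \ref{1.3} upgrades $B$ to a continuous sesquilinear orthogonality-preserving $S$ with $f=T+S(\cdot,\cdot)$; when $\dim\H<\infty$ one has $\B(\H)=\K(\H)$ and this is already Corollary \ref{3.4}, so the new content is $\dim\H=\infty$, where the splitting gives $W_0=W$ outright and $\|S\|<\infty$. Exploiting that $I\in\B(\H)$, I define $\Phi(a):=S(a,I)$, which is linear and bounded with $\|\Phi(a)\|\le\|S\|\,\|a\|$; it then remains to prove $S(x,x)=\Phi(x^*x)=S(x^*x,I)$ for all $x$. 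For a projection $q$ the identity $q^*(I-q)=0$ and orthogonality-preservation give $S(q,I-q)=0$, i.e.\ $S(q,q)=S(q,I)$. For a positive $p$ with finite spectrum, writing $p=\sum_i\lambda_i q_i$ with orthogonal spectral projections, orthogonality-preservation removes all cross terms and the projection case gives $S(p,p)=\sum_i\lambda_i^2 S(q_i,q_i)=\sum_i\lambda_i^2 S(q_i,I)=S(p^2,I)=\Phi(p^2)$; by continuity of $S$ and norm-density of finite-spectrum positives this extends to all $p\ge0$.

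The main obstacle is passing from positive $x$ to arbitrary $x\in\B(\H)$. Using the polar decomposition $x=w|x|$ and the positive case applied to $|x|$, the target $S(x,x)=\Phi(x^*x)=\Phi(|x|^2)=S(|x|,|x|)$ is equivalent to the partial-isometry invariance $S(w|x|,w|x|)=S(|x|,|x|)$. I expect to prove this by the same spectral reduction—decomposing $|x|$ into orthogonal spectral projections so that orthogonality-preservation again kills cross terms—thereby reducing to $S(u,u)=S(u^*u,u^*u)$ for partial isometries $u$, which I verify first for rank-one $u=\xi\otimes\eta$ (the case $\xi\perp\eta$ by a direct orthogonality computation, the general case by chaining through an auxiliary orthogonal vector, which is where $\dim\H\ge2$ is genuinely used) and then in general by spectral approximation. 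Establishing this invariance cleanly is the crux; once it is in hand, $f=T+\Phi(x^*x)$ holds on all of $\B(\H)$, while continuity and uniqueness of $\Phi$ follow from the bound above together with $\Phi(x^*x)=f(x)-T(x)$ on the positive cone, and $T$ is continuous because it is given by the formula of Remark \ref{TB}.
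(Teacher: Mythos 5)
Your first (purely algebraic) half is correct and is essentially the paper's own argument: the same identification of $\B(\H_1,\H_2)$ as a Hilbert $\B(\H_1)$-module, the same halving of $\H_2$ by a subspace $\K$ with an isometry $U\colon \K\to\K^{\perp}$ acting by left composition (so $\varphi(V)=V^{\perp}$ and $W_0=W$ when $\dim\H_2$ is even or infinite), and the same covering of the odd-dimensional case by a line $L$, its complement $L^{\perp}$, and a two-dimensional $L_2\supseteq L$. You flesh out the gluing step that the paper leaves terse, and your details check out: by Remark \ref{TB} the universal $T$ is additive and the universal $B$ vanishes on every orthogonal pair of $W$, so additivity of $T$, biadditivity of $B$, and $f=T+B(\cdot,\cdot)$ do propagate from $Z$ and $X^{\perp}$ to all of $W$.

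The continuous statement over $\B(\H)$ is where there is a genuine gap, and you have correctly located it but not closed it. The paper does not attempt a self-contained representation of $S$ here: after Lemma \ref{1.3} it invokes the zero-product-preserver results of Alaminos, Bre\v{s}ar, Extremera and Villena (the references \cite{abev1} and \cite[Theorem 1.1]{abev2}) to pass from the continuous orthogonality-preserving sesquilinear $S$ to $\Phi(x^*x)$; that machinery is precisely what replaces the orthonormal-basis argument of Proposition \ref{P:repS}, which is unavailable for $\B(\H)$, and it is where the hypothesis that $G$ is a Banach space is consumed. Your substitute $\Phi(a):=S(a,I)$ is fine on positives (the projection and finite-spectrum computations are correct, and norm-density of finite-spectrum positives suffices there), but the crux you defer --- the invariance $S(u,u)=S(u^*u,u^*u)$ for partial isometries --- is the entire difficulty, and the route you sketch cannot work as stated. ``Spectral approximation'' does not reduce a general partial isometry to rank-one ones: an infinite-rank partial isometry (e.g.\ a unitary, which you cannot avoid since $x$ may be invertible, leaving no orthogonal room for your chaining trick either) lies at norm distance $1$ from the finite-rank operators, and the natural orthogonal decomposition $u=\sum_k (u\eta_k)\otimes\eta_k$ converges only strongly, whereas $S$ is only norm-continuous. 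Symptomatically, your argument never uses completeness of $G$, which the proposition assumes; a sketch that proves the statement for arbitrary normed $G$ should be treated with suspicion. The fix is either to quote the cited zero-product results, as the paper does, or to supply a genuinely new argument for the partial-isometry invariance in the infinite-dimensional, non-compact setting; as written, the second half of your proof is incomplete at exactly this point.
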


\begin{proof}
Let us emphasize that $\B(\H_1, \H_2)$ is a Hilbert $\B(\H_1)$-module with respect to the inner product $\langle x, y \rangle = x^*y.$
If $\H_2$ is infinite dimensional, let $\K$ be a closed subspace of $\H_2$ such that both $\K$ and ${\K^\perp}$ are infinite dimensional.
If $\dim \H_2 = 2n,$ let $\K$ be an $n$-dimensional subspace of $\H_2.$
Let $U:\K\to \K^\perp$ be unitary. Then $\varphi: \B(\H_1, \K) \to \B({\H_1, \K^\perp}),$ $\varphi(A)=UA$  is an isomorphism.
Notice that $\B(\H_1, \K) \oplus \varphi(\B(\H_1, \K))  = \B(\H_1, \H_2).$
It remains to apply Theorem \ref{2.1} (i).

If $\dim \H_2 =2n+1,$ let $\K$ be a $1$-dimensional subspace of $\H_2.$
Then $\dim {\K^\perp} = 2n$ and according to the above, the statement holds on $\B(\H_1, \K^{\perp}).$
Let $\M$ be a $2$-dimensional subspace of $\H_2$ containing $\K.$
Again, according to the above, the statement also holds true on $\B(\H_1, \M),$ hence
finally on $\B(\H_1, \H_2).$

The second statement can be proved in a similar way, but using Theorem 3.1 (ii) instead of Theorem 3.1 (i), and then applying the results from [3] (see also [4, Theorem 1.1]) to reperesent $ S $ via $ \Phi $.Ò
\end{proof}

\end{document}